\definecolor{darkgreen}{RGB}{47,139,79}
\definecolor{darkblue}{RGB}{36,24,130}
\newcommand{\im} {\operatorname{Im}}
\newcommand{\Aut} {\operatorname{Aut}}
\newcommand{\inv} {\ensuremath ^{-1}}
\newcommand{\F} {\ensuremath{\mathbb{F}}}
\newcommand{\set}[2] {\left\lbrace {#1} \ \middle\arrowvert\ {#2} \right\rbrace}
\newcommand{\La}{\Lambda}
\DeclareMathOperator\id{id}
\DeclareMathOperator\G{\mathrm{GL}}
\DeclareMathOperator\Sp{\mathrm{Sp}}
\DeclareMathOperator\OO{\mathrm{O}}
\DeclareMathOperator\UU{\mathrm{U}}
\DeclareMathOperator\Gi{\mathbf{G}}
\DeclareMathOperator\Ai{\mathbf{A}}
\DeclareMathOperator\PPi{\mathbf{P}}
\DeclareMathOperator\R{\mathcal{R}} 
\DeclareMathOperator\K{\mathcal{K}} 
\newcommand{\Forms}{\operatorname{Form}}
\newtheorem{thm}{Theorem}[section]
\newtheorem{prop}[thm]{Proposition}
\newtheorem{lemma}[thm]{Lemma}
\newtheorem{lem}[thm]{Lemma}
\newtheorem{cor}[thm]{Corollary}
\newtheorem{Th}{Theorem}
\theoremstyle{definition}
\newtheorem{rem}[thm]{Remark}
\newtheorem{defn}[thm]{Definition}
\newtheorem{Def}[thm]{Definition}
\newtheorem{ex}[thm]{Example}
\numberwithin{equation}{section}
\newcommand{\s}{\sigma}
\newcommand{\om}{\omega}
\newcommand{\eps}{\varepsilon}
\newcommand{\op}{\oplus}
\newcommand{\x}{\times}
\newcommand{\rar}{\longrightarrow}
\newcommand{\sta}{\stackrel}
\newcommand{\note}[1]
{{\bf [N: #1]}}
\begin{document}

\title{Forms over fields and Witt's lemma}
\author{David Sprehn}
\address{Department of Mathematics\\ 
University of Copenhagen\\ 
Universitetsparken 5\\ 
2100 Copenhagen\\ 
Denmark}
\email{david.sprehn@gmail.com}

\author{Nathalie Wahl}
\address{Department of Mathematics\\ 
University of Copenhagen\\ 
Universitetsparken 5\\ 
2100 Copenhagen\\ 
Denmark}
\email{wahl@math.ku.dk}

\begin{abstract}
We give an overview of the general framework of  forms of Bak, Tits and Wall, when restricting to vector spaces over fields, and describe its relationship to the classical notions of Hermitian, alternating and quadratic forms. We then prove 
a version of Witt's lemma in this context, showing in particular that the action of the group of isometries of a space equipped with a form is transitive on isometric subspaces. 
\end{abstract}

\maketitle

\section{Introduction}

Let $\F$ be a (commutative) field, and $\s:\F\to \F$  an involution, that is a field isomorphism squaring to the identity. 
To simplify notations, we will write $\bar x$ for $\s(x)$ in analogy with complex conjugation. 
Let $E$ be an $\F$--vector space. Recall that a map $f:E\times E\to\F$ is called {\em sesquilinear} if it is biadditive and 
satisfies that  $f(av,bw)=\bar af(v,w)b$ for all $a,b\in \F$ and
$v,w\in E$. 
If $\s$ is the identity, such a map is just a bilinear form.

The ``classical groups'' over $\F$ are  the automorphism groups ({\em isometry groups}) of different types of forms on $\F$--vector spaces:

\smallskip

\noindent
(1) 
A symplectic group is the group of isometries of a vector
space $E$ equipped with an {\em alternating form}, that is a bilinear map $\omega:E\times E\to\F$ such that 
\[ \omega(v,v)=0.\]

\smallskip

\noindent
(2) 
A  unitary group is the group of isometries of a vector space $E$ equipped with a 
{\em Hermitian form}, that  is a map $\omega:E\times E\to\F$ that is sesquilinear  with respect to a (non-trivial) involution $\s:\F\to\F$, and such that 
\[ \om(v,w)=\overline{\om(w,v)}. \]

\smallskip

\noindent
(3) 
An orthogonal group  is the group of isometries of a vector space $E$ equipped with a {\em quadratic form}, that is a set map $Q:E\to \F$
such that
\[ Q(cv)=c^2Q(v)\textrm{ for all }c\in\F,\ v\in E \]
and such that the associated map
$B_Q(v,w)=Q(v+w)-Q(v)-Q(w)$
is bilinear.

These examples of ``forms'' all fit into a common framework,
 developed in particular by Bak, Tits, Wall (see eg.~\cite{Bak69,Tits,Wallaxiomatic,WallII}) and later by Magurn-Vaserstein-van der Kallen \cite[Sec~4]{MVV}. This framework 
 allows to treat all three cases at once,  including when the field has characteristic  2, and symmetric and  anti-symmetric forms are indistinguishable. 
The framework makes sense in the context of rings with
anti-involutions, and has turned out very useful  for example for 
proving homological stability of symplectic, unitary and orthogonal groups in one go, see eg.~\cite{Frie,Mir,RWW} in the context of rings, and \cite{Classical} in the special case of fields, or in the study of certain classes of subgroups of general linear groups, see eg.~\cite{Petrov}. 

In the present paper, we give an overview of what this general framework becomes under the assumption that we work only with fields instead of general rings. In particular, we 
 assemble and complete results from the literature to show that in this case, alternating, Hermitian and quadratic forms are essentially the only existing flavors of forms arising in this context. 
In the second part of the paper, we
prove a general version  of the classical Witt's Lemma \cite{Witt}, showing that the isometry group acts transitively on isometric subspaces. We allow the case of degenerate forms that is often not treated in the literature, and provide a relative version. This last result is  used by our companion paper \cite{Classical}, which proves a homological stability result for such isometry groups of vector spaces equipped with forms.

\medskip

We describe our main results in more detail now.

Fix a pair $(\F,\s)$ with $\F$ a field and $\s$ an involution (possibly the identity). 
The generalized  definition of a {\em form} over an $\F$--vector space $E$, which we describe now,  depends on the further choice of 
an element $\eps\in \F$ such that $\eps\bar\eps=1$,  and a certain additive
subgroup $0\le \Lambda\le \F$. In the present situation, with $\F$ a field, the group $\Lambda$ is almost always determined by the triple $(\F,\s,\eps)$ (see Proposition~\ref{prop:Lambda} in the Appendix).

Given such a tuple $(\F,\s,\eps,\La)$ and a sesquilinear form $q:E\x E\to \F$, we can construct two new maps:  
define $\omega_q:E\x E\to \F$ and $Q_q:E\to \F/\Lambda$ by setting
 \[ \omega_q(v,w)=q(v,w)+\eps\overline{q(w,v)} \ \ \ \textrm{and}\ \ \  Q_q(v)=q(v,v)+\Lambda\in\F/\Lambda. \]
The map $\omega_q$ is an ``$\eps$-skew symmetric sesquilinear form'' by construction, and $Q_q$ is to be thought of as  a ``quadratic refinement'' of $\omega_q$.  
 Theorem~\ref{thm:qdetermined} shows that $\omega_q$ and $Q_q$ are always related by the equation 
$$Q_q(v+w)-Q_q(v)-Q_q(w)=\omega_q(v,w) \in \F/\Lambda.$$ 
In particular, if $\s=\id$, $\eps=1$ and $\La=0$, then $Q_q$ is precisely a quadratic form as described above, with $\omega_q$ its associated bilinear form.
But if $\s\neq \id$ with $\eps=1$, then $\omega_q$ is a Hermitian form. To get alternating forms, we will take $\s=\id$ and $\eps=-1$.

The association $q \mapsto (\omega_q,Q_q)$, from sesquilinear maps to such pairs of maps, is not injective.  
Let $X(E,\sigma,\eps,\Lambda)\le \operatorname{Sesq}_\sigma(E)$ denote its kernel, where 
$\operatorname{Sesq}_\sigma(E)$ denotes the vector space of sesquilinear forms on $E$. (See Definition~\ref{def:form} for a direct description of $X$.)
 The group of all {\em $(\s,\eps,\Lambda)$--quadratic forms on $E$} is  defined to be the quotient 
\[\Forms(E,\s,\eps,\Lambda):= \operatorname{Sesq}_{\sigma}(E)/X(E,\sigma,\eps,\Lambda).  \]
By definition, the form $q$ is thus always determined by the pair $(\omega_q,Q_q)$ 
and in fact, Proposition~\ref{prop:inject} shows that only one of $\omega_q$ or $Q_q$ is always enough to determine $q$, but which one of the two depends on the chosen $\Lambda$. 
Hence, instead of working sometimes with forms like $\omega_q$, sometimes with forms like $Q_q$, we can always just work with equivalence classes of sesquilinear forms $q$. 
This leaves us with the question of what ``forms like $\omega_q$ or $Q_q$'' arise in this framework? This is answered by 
our first main result, which says that, for almost all choices of parameters $(\s,\eps,\Lambda)$, the general framework of forms simply specializes to the classical Hermitian, alternating and quadratic forms: 

\begin{Th}\label{introthm:qspecialcases}
(1)~When $\s\neq \id$, 
there is a natural isomorphism 
 $$\Forms(E,\sigma,\eps,\Lambda) \cong \textrm{Herm}_{\s}(E).$$ 

\noindent (2)~When $\s=\id$, $\eps=-1$ and $\Lambda=\F$, 
there is a natural isomorphism 
 $$\Forms(E,\sigma,\eps,\Lambda) \cong \textrm{Alt}(E).$$ 

\noindent (3)~When  $\s=\id$, $\eps=1$ and $\Lambda=0$, there is a natural isomorphism 
$$\Forms(E,\s,\eps,\Lambda) \cong  \textrm{Quad}(E).$$
\end{Th}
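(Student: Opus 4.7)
The plan is to handle the three cases in parallel, by choosing in each case the ``right'' surviving invariant among $\omega_q$ and $Q_q$, and then verifying that $[q]$ is determined by this invariant. In cases (2) and (3), one of the two invariants is trivial by the choice of $\La$, so only the other carries information; case (1) is the substantive one and requires Hilbert 90 in both additive and multiplicative forms.

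\textbf{Cases (2) and (3).} In case (2), $\La=\F$ makes $Q_q$ trivial, and $\omega_q(v,v)=q(v,v)-q(v,v)=0$ shows $\omega_q$ is alternating, so one uses the map $[q]\mapsto\omega_q$. In case (3), $\La=0$ gives $Q_q\colon E\to\F$ honestly, with $Q_q(cv)=c^2Q_q(v)$ immediate and the polarization identity of Theorem~\ref{thm:qdetermined},
\[ Q_q(v+w)-Q_q(v)-Q_q(w)=\omega_q(v,w), \]
providing the required bilinear associated form; thus $Q_q$ is quadratic and one uses $[q]\mapsto Q_q$. Surjectivity in each case is verified by picking a basis $(e_i)$ of $E$ and taking an ``upper triangular'' lift: in case (2), set $q(e_i,e_j)=\omega(e_i,e_j)$ for $i<j$ and $0$ otherwise; in case (3), set $q(e_i,e_j)=B_Q(e_i,e_j)$ for $i<j$, $q(e_i,e_i)=Q(e_i)$, and $0$ for $i>j$. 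For injectivity, since $q\in X$ iff both $\omega_q=0$ and $Q_q=0$, I only need vanishing of the surviving invariant to force vanishing of the other: in case (2) this is automatic, and in case (3) polarization gives $\omega_q=0$ from $Q_q=0$.

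\textbf{Case (1).} Here $\omega_q$ is $\eps$-Hermitian in the sense $\omega_q(v,w)=\eps\overline{\omega_q(w,v)}$. Since $\eps\bar\eps=1$, multiplicative Hilbert 90 for the quadratic extension $\F/\F^\sigma$ yields $\alpha\in\F^\times$ with $\alpha/\bar\alpha=\eps$, and scaling by $\alpha^{-1}$ identifies $\eps$-Hermitian forms with Hermitian forms. Fix such an $\alpha$ and define the comparison map by $[q]\mapsto\alpha^{-1}\omega_q$. For surjectivity, given a Hermitian $\omega$, set $\omega':=\alpha\omega$ (which is $\eps$-Hermitian) and $q:=\lambda\omega'$ where $\lambda\in\F$ satisfies $\lambda+\bar\lambda=1$, available via additive Hilbert 90 since the trace $\F\to\F^\sigma$ is surjective when $\sigma\neq\id$; a direct computation using $\eps\bar\eps=1$ then gives $\omega_q=\omega'$, hence $\alpha^{-1}\omega_q=\omega$. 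For injectivity, $\omega_q=0$ yields $q(v,v)+\eps\overline{q(v,v)}=0$ for all $v$, so $q(v,v)$ lies in $\La_{\max}:=\{x\in\F:x+\eps\bar x=0\}$. The essential input here is Proposition~\ref{prop:Lambda}, which in case (1) forces $\La=\La_{\max}$ when $\sigma\neq\id$, so $Q_q=0\in\F/\La$ and $q\in X$.

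\textbf{Main obstacle.} The bulk of the work is case (1), where three ingredients must mesh: additive Hilbert 90 (for preimages under $\omega_q$), multiplicative Hilbert 90 (for normalizing $\eps$ away), and the appendix determination of $\La$ (to collapse $Q_q$ once $\omega_q$ vanishes). The remaining cases are basis-level bookkeeping, with the polarization identity of Theorem~\ref{thm:qdetermined} linking the two invariants in case (3).
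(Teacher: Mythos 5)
Your proof is correct, and its overall architecture coincides with the paper's: both arguments run through the pair of invariants $(\omega_q,Q_q)$ of Theorem~\ref{thm:qdetermined}, use Proposition~\ref{prop:Lambda} to pin down $\La$ (forcing $\La=\La_{max}$ when $\s\neq\id$, so that $\omega_q=0$ implies $Q_q=0$, exactly the paper's Proposition~\ref{prop:inject}(1)), use upper-triangular basis lifts for surjectivity in cases (2) and (3) (these are verbatim the constructions in the paper's Lemma~\ref{lem:surject}), and use multiplicative Hilbert 90 to normalize $\eps$ to $1$ (the paper's Lemma~\ref{lem:epsilonredundant}). The one genuinely different step is surjectivity in case (1): the paper's Lemma~\ref{lem:surject}(1) again works in a basis, choosing diagonal entries $a_i$ with $a_i+\eps\bar a_i=\omega(x_i,x_i)$, whose existence needs a second appeal to Proposition~\ref{prop:Lambda} with $\eps$ replaced by $-\eps$; you instead take the basis-free lift $q=\lambda\omega'$ with $\lambda+\bar\lambda=1$, which works since
\[ \omega_q(v,w)=\lambda\,\omega'(v,w)+\eps\bar\lambda\,\overline{\omega'(w,v)}=(\lambda+\eps\bar\eps\bar\lambda)\,\omega'(v,w)=(\lambda+\bar\lambda)\,\omega'(v,w)=\omega'(v,w), \]
using that $\overline{\omega'(w,v)}=\bar\eps\,\omega'(v,w)$ for an $\eps$-Hermitian form. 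Your construction is cleaner and coordinate-free, but it is available only when $\s\neq\id$ (for $\s=\id$ in characteristic $2$ the trace $a\mapsto 2a$ vanishes identically), whereas the paper's single basis lemma covers all parameter choices uniformly, including the residual ``mod $\La$'' cases of Remark~\ref{rem:almostall}(iii) that your three-way case split does not address. Two minor points, neither a gap: the fact you need for $\lambda$ is surjectivity (equivalently nonvanishing) of the trace of the separable quadratic extension $\F\supset\F^{\s}$ --- the same fact the paper invokes in the appendix proof of Proposition~\ref{prop:Lambda} --- rather than additive Hilbert 90, which describes the kernel of the trace, not its image; and your case-(3) injectivity, reading $\omega_q=0$ directly off the polarization identity when $\La=0$, is a legitimate shortcut past the paper's Proposition~\ref{prop:inject}(2), which treats arbitrary $\La\neq\F$ by a scaling argument. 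Your case (2) also quietly streamlines the paper's: by lifting directly from an alternating $\omega$ you get image exactly $\textrm{Alt}(E)$ in all characteristics at once, where the paper first identifies the image inside $\textrm{Herm}_{\id,-1}(E)$ and then observes that this group equals $\textrm{Alt}(E)$ away from characteristic $2$.
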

Here $\textrm{Herm}_{\s}(E)$, $\textrm{Alt}(E)$ and $\textrm{Quad}(E)$ are the groups of Hermitian, alternating and quadratic forms as described above. (See also Definition~\ref{Def:classical}.)
When $\F$ is a finite field, there is only one isomorphism class of non-degenerate such form in each dimension, except in the orthogonal case (3) (see eg.~\cite[Chap II]{FP}). Forms over infinite fields is on the other hand a vast subject, see eg.~the book \cite{Lam} which is concerned with quadratic forms over fields of characteristic not 2.

\begin{rem}\label{rem:almostall} (i) When $\s\neq \id$, the result shows that the group of forms depends neither on $\Lambda$, 
which is a consequence of the fact that there is only one possible $\Lambda$ in this case by Proposition~\ref{prop:Lambda}, nor on $\eps$.

\smallskip

\noindent
(ii) When $\s=\id$, there are two main possibilities: the alternating case (2), and the quadratic case (3). 
If the field is not of characteristic 2,  cases (2) and (3) are distinguished by whether $\eps=1$ or $\eps=-1$ and $\Lambda$ is determined by this data. 
If $\F$ has characteristic 2, then $1=-1$ and cases (2) and (3) in the theorem are distinguished by the choice of $\Lambda$. This is in fact the one situation 
where there is a freedom of choice for $\Lambda$ 
and cases (2) and (3) correspond to the two possible extremes, the smallest and largest possible $\Lambda$. 
Note also that, given $\Lambda$, Proposition~\ref{prop:Lambda} shows that $\eps$ is in fact always redundant information when $\s=\id$. We chose to leave the $\eps$ in the statement as it is informative. 

\smallskip

\noindent
(iii) The only cases not covered by the theorem are thus in characteristic 2 with $\s=\id$ and $0<\Lambda<\F$. 
Proposition~\ref{prop:perfectfield} shows that under the additional assumption that $\F$ is a perfect field, also these cases are actually covered by (3). More generally, combining Proposition~\ref{prop:inject}\,(2) and Lemma~\ref{lem:surject}\,(2), we get that a ``mod $\Lambda$'' version of (3) holds under the weaker assumption that $\s=\id$ and $\Lambda<\F$, giving the remaining cases. 
\end{rem}

The above results are all proved in Section~\ref{sec:forms}.

\medskip

The second part of the paper is concerned with isometries: 
 Given a vector space $E$ equipped with a form $q$, called here a {\em formed space} $(E,q)$, an {\em isometry} of $E$ is a linear map respecting the form.  
For appropriate choices of $q$, the classical groups $\Sp_{2n}(\F)$, $\OO_n(\F)$, $\OO_{n,n}(\F)$, $\UU_n(\F)$ and $\UU_{n,n}(\F)$ are all such isometry groups (see Example~\ref{ex:groups}). 
Restricting the form $q$ to subspaces of $E$,  we can talk about isometries between subspaces. 

The classical groups just mentioned are all automorphism groups of {\em non-degenerate}  formed spaces, that is with trivial kernel, 
where the kernel $\K(E)$ of $(E,q)$ is defined to be the kernel of the map 
\[ E\to {^\sigma\! E^*},\ v\mapsto\omega_q(-,v), \]
where ${^\sigma\! E^*}$ denotes the vector space of $\s$-skew-linear maps $E\to \F$. 
Degenerate formed spaces do however also naturally occur (see
eg.~Example~\ref{ex:Cox} for  examples arising from Coxeter groups,
or, in the context of rings, \cite[Sec 5]{GRWI} for examples coming
from intersections of immersed spheres on manifolds).  
In the present paper, we do  allow forms with non-trivial kernels.  

Our main result in the second part of the paper is the following:

\begin{Th}[Witt's Lemma (Theorem~\ref{lem:Witt})]
Let $(E,q)$ be a formed space.
Suppose that $f:U\to W$ is a bijective isometry between two subspaces of $E$ such that $f\big(U\cap\K(E)\big)= W\cap\K(E)$. 
Then $f$ can be extended to a bijective isometry of $E$. 
\end{Th}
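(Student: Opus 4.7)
The plan is to induct on $n = \dim U = \dim W$; the base case $n=0$ is trivial since the empty map extends to $\id_E$. For the inductive step I would split on whether $U \subseteq \K(E)$ or not.

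Suppose first that $U \not\subseteq \K(E)$. I would pick a hyperplane $U_0 \subsetneq U$ containing $U \cap \K(E)$, which is possible precisely because $U \cap \K(E) \subsetneq U$. Using injectivity of $f$ together with $f(U \cap \K(E)) = W \cap \K(E)$, one checks that $U_0 \cap \K(E) = U \cap \K(E)$ and $f(U_0)\cap \K(E) = f(U_0 \cap \K(E))$, so that the restriction $f|_{U_0} : U_0 \to f(U_0)$ again satisfies the hypothesis of the theorem. By the inductive hypothesis it extends to a bijective isometry $g$ of $E$. Picking any $u \in U \setminus U_0$ and writing $w = f(u)$, $u' = g^{-1}(w)$, it then remains to produce an isometry $h$ of $E$ fixing $U_0$ pointwise with $h(u) = u'$, since $g \circ h$ will be the desired extension. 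The fact that $g|_{U_0} = f|_{U_0}$ and both $f,g$ are isometries forces $u' - u \in U_0^\perp$ (with respect to $\omega_q$) and $Q_q(u') = Q_q(u)$ in $\F/\Lambda$. I would then construct $h$ as a transvection, or a short product of transvections, adapted to $u$ and $u' - u$.

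In the complementary case $U \subseteq \K(E)$, the kernel hypothesis automatically forces $W \subseteq \K(E)$ as well, and the isometry condition on $f$ reduces to $Q_q \circ f = Q_q$ on $U$, since $\omega_q$ vanishes on $\K(E) \times E$. I would fix a linear complement $E = \K(E) \oplus E'$: on $\K(E)$ the map $Q_q|_{\K(E)} : \K(E) \to \F/\Lambda$ is additive (because $\omega_q$ vanishes on $\K(E) \times \K(E)$) and $\s$-sesquilinear in the appropriate sense. It then suffices to extend $f$ to a $Q_q$-preserving automorphism of $\K(E)$ and take the identity on $E'$. This extension step amounts to a "Witt's lemma" for an additive map into $\F/\Lambda$, a more elementary linear-algebraic statement than the full sesquilinear one.

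The principal obstacle is the construction of the transvection $h$ in the first case. When $\omega_q(u' - u, u) \neq 0$, a single transvection based on $u'-u$ does the job; the verification that it preserves $Q_q$ uses the identity $Q_q(u') = Q_q(u)$ together with the defining relation between $Q_q$ and $\omega_q$. When $u'-u$ is isotropic for $\omega_q$---which is automatic whenever $u' - u \in \K(E)$ and is unavoidable in characteristic $2$---I would instead have to invoke an auxiliary vector $y \in U_0^\perp \setminus \K(E)$ (whose existence exploits $u \notin \K(E)$) and carry out the move by two transvections via a hyperbolic pair. Keeping track of these subcases, together with the bookkeeping for $Q_q$ modulo $\Lambda$, will be the technical heart of the argument.
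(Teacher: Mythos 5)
Your skeleton matches the paper's proof closely: induction on $\dim U$, the split on whether $U\subseteq\K(E)$, the choice of a hyperplane $U_0\supseteq U\cap\K(E)$ (the paper's $H$), the verification that the restricted map still satisfies the kernel hypothesis, and the reduction to finding an isometry $h$ of $E$ fixing $U_0$ pointwise with $h(u)=u'$, where indeed $u'-u\in U_0^\perp$ and $Q_q(u')=Q_q(u)$ --- all of this is correct and is exactly how the paper proceeds. Your non-isotropic subcase (when $\omega_q(u'-u,u)\neq 0$, take the map that is the identity on $(u'-u)^\perp$ and sends $u\mapsto u'$) is also sound; it is the paper's Case 2.1. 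The genuine gap is the tool you propose for the remaining subcases. A transvection (Eichler/Siegel transformation) is an isometry only when based at a $Q_q$-isotropic vector, and in a general formed space such vectors need not exist outside the radical, while a hyperbolic pair need not exist at all. Concretely, take $(\s,\eps,\La)=(\id,1,0)$ over $\F_2$ and $E=\langle e_1,e_2\rangle\perp\langle k\rangle$ with $Q_q(xe_1+ye_2+zk)=x^2+xy+y^2$, so that $\K(E)=\R(E)=\langle k\rangle$ and the only isotropic vectors lie in the radical. With $U=\langle e_1\rangle$, $W=\langle e_1+k\rangle$, $f(e_1)=e_1+k$, the hypotheses of the theorem hold and $u'-u=k\in\K(E)$; but $E$ contains no hyperbolic pair and every transvection is the identity (and in characteristic $2$ there are no reflections either), so no product of transvections can realize the extension. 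Yet the theorem is true: the extension is the ``shear'' that is the identity on $\langle e_2,k\rangle$ and sends $e_1\mapsto e_1+k$. This kernel-displacement case $u'-u\in\K(E)$ is unavoidable for degenerate forms and is precisely the paper's Case 1, which your plan cannot handle.

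The paper avoids transvections altogether: every extension is built as the identity on a \emph{common} complement of two subspaces, supplied by Lemma~\ref{lem:simultaneous complement} and checked to be an isometry via Lemma~\ref{lem:f+}, plus --- in the hardest case, your isotropic case with $u'-u\notin\K(E)$ --- an ansatz $f(v)=c(b-a)+dv$ or $f(z)=z+c(b-a)+dv$ solved for two scalars. Even there, the auxiliary vector $v$ must be taken in $M^\perp$ for a hyperplane $M$ of $(b-a)^\perp$ containing both $U_0$ and $\K(E)$, and its existence rests on the identity $M^{\perp\perp}=M+\K(E)$; your weaker observation that some $y\in U_0^\perp\setminus\K(E)$ exists does not replace this, since such a $y$ may be orthogonal to $u$, or fail to be $Q_q$-isotropic (as in the example above), or fail to pair with $u'-u$ when $u'-u\in U_0+\K(E)$. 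One smaller point: in your base case $U\subseteq\K(E)$, extending $f$ to a $Q_q$-preserving automorphism of $\K(E)$ by taking the identity on a complement of $U$ in $\K(E)$ does not work directly, because that complement need not be a complement of $W$; you again need a simultaneous complement, i.e., Lemma~\ref{lem:simultaneous complement}, which is how the paper handles this case.
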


Theorem~\ref{thm:relativeWitt} in the paper gives in addition a relative version of Witt's lemma, where  a subspace may be assumed to be fixed by the constructed isometry. 

Witt originally considered in \cite{Witt} the case of non-degenerate symmetric forms in characteristic not 2.  His result was generalized by many authors (see eg.~\cite[Prop 2]{Tits} and \cite[p21,22,36]{Dieudonne}, \cite[Cor 8.3]{MVV}, \cite[Thm 3.9]{Artin}, \cite[Thm 1]{Petrov} and the paper \cite{Huang} for many references to the existing literature). 
We were however not able to find the result in the above generality, as needed by our companion paper \cite{Classical}.

The proofs of Witt's lemma and its relative version are given in Section~\ref{sec:Witt}.

\section{Forms}\label{sec:forms}

Let $(\F,\s)$, as above, be a field with a chosen involution. Examples
we have in mind are $\F$ any field with $\s=\id$, or $\F=\mathbb{C}$ with $\s$ the conjugation, or $\F=\F_{p^{2r}}$ a finite field with $\s=(-)^r$.  As before, we write  $\bar c=\sigma(c)$.

Pick $\eps\in\F$ satisfying $\eps\bar\eps=1$. Taking $\eps=\pm 1$ is always a possibility, but when the involution is non-trivial, there may be many more choices. 
For example $\eps=i$ also works when $(\F,\s)$ is the complex numbers with conjugation. 

Set 
\begin{align*} \Lambda_{min} &= \set{a-\eps\bar a}{a\in\F} \\
 \Lambda_{max} &= \set{a\in\F}{a+\eps\bar a=0}. \end{align*}
These are additive subgroups of $\F$. Let $\Lambda$ be an abelian group such that
\[ \Lambda_{min}\leq\Lambda\leq\Lambda_{max}. \]
Proposition~\ref{prop:Lambda} in the appendix shows that for most triples $(\F,\s,\eps)$, we actually have $\Lambda_{min}=\Lambda_{max}$ so 
that there is a unique $\Lambda$ determined by the triple. Typical values of $\La$ are $0$, the fixed points 
$\F^{\s}$ of the involution, or the whole field $\F$.

\begin{Def}\label{def:form}
A $(\sigma,\eps,\Lambda)$-\textit{quadratic form} on $E$ (or just a {\em form} for short) is an element of the quotient
\[\Forms(E,\s,\eps,\Lambda):= \operatorname{Sesq}_{\sigma}(E)/X(E,\sigma,\eps,\Lambda),  \]
of the vector space  $\operatorname{Sesq}_\sigma(E)$ of sesquilinear forms on $E$
by  its the additive subgroup  $X(E,\sigma,\eps,\Lambda)$ of all those $f$ satisfying
\[ f(v,v)\in\Lambda  \ \ \ \textrm{and}\ \ \   f(w,v)=-\eps\overline{f(v,w)} \]
for all $v,w\in E$. 
A \textit{formed space} $(E,q)$ is a finite-dimensional vector space $E$ equipped with a form $q$.
\end{Def}
We will see in Theorem~\ref{thm:qdetermined} below that this agrees with the definition given in the introduction, where $X$ was defined as the kernel of a map. 

\begin{rem}
A typical example of an element in $X(E,\sigma,\eps,\Lambda)$ has the following form: let $q\in \operatorname{Sesq}_{\sigma}(E)$ be a sesquilinear form, and let 
$$f(v,w)=q(v,w)-\eps \overline{q(w,v)}.$$
Then $-\eps\overline{f(v,w)}=-\eps\overline{q(v,w)}+\eps\overline{(\eps \overline{q(w,v)})}=f(w,v)$ given that $\eps\bar \eps=1$, while $f(v,v)\in \La_{min}\le \La$. 
In particular, $X(E,\sigma,\eps,\Lambda)$ is always non-trivial. 
\end{rem}

\begin{ex}\label{ex:forms}
\noindent (1) Let $E=\F^n$. The {\em Eucledian form} on $E$ is the form $q_{\mathcal{E}}$ defined by $$q_{\mathcal{E}}(v,w)=\sum_{i=1}^n\overline{v_i}w_i.$$

\smallskip

\noindent (2) Let $E=\F^{2n}$. The {\em hyperbolic form} on $E$ is the form $q_{\mathcal{H}}$ defined by $$q_{\mathcal{H}}(v,w)=\sum_{i=1}^n\overline{v_{2i-1}}w_{2i}.$$ 
\end{ex}

If $E$ is finite dimensional, say of dimension $n$, the choice of a basis gives a one-to-one correspondence between sesquilinear forms on $E$ and $(n\x n)$--matrices with coefficients in $\F$, with the $(i,j)$--th entry of the matrix being the value of the form on the $i$th and $j$th basis vectors. 
Describing the quotient of sesquilinear forms by $X(E,\sigma,\eps,\Lambda)$ is of course much more tricky. 

\medskip

The set of forms on $E$ is contravariantly functorial in $E$: from a linear map $f:E\to F$ and a form $q$ on $F$, we produce a form on $E$ as
\[ f^*q(v,w)=q(f(v),f(w)) \]
for $v,w\in E$.
This is indeed well-defined as $f^*q$ is again sesquilinear, and $f^*$
takes $X(F,\sigma,\eps,\Lambda)$ to  $X(E,\sigma,\eps,\Lambda)$.

\medskip

The goal of this section is to relate forms in the sense of Definition~\ref{def:form} to the more classical notions of Hermitian, alternating, and quadratic forms.

\begin{Def}\label{Def:classical} Let $E$ be a vector space over $\F$. Define

\smallskip

\noindent
\begin{description}
\item[$\textrm{Herm}_{\s,\eps}(E)$] the vector space of sesquilinear forms $\omega: E\x E\to \F$ satisfying that 
$$\omega(v,w)=\eps\overline{\omega(w,v)}$$ for all $v,w\in E$, considered here as a group. 
We write $\textrm{Herm}_{\s}(E)=\textrm{Herm}_{\s,1}(E)$ for the classical Hermitian forms.

\item[$\textrm{Alt(E)}\le \textrm{Herm}_{id,-1}(E)$] the subspace of alternating forms, i.e.~those such that $$\omega(v,v)=0$$ for all $v\in E$.

\item[$\textrm{Set}(E,\F/\Lambda)$] the group of set maps $Q:E\to \F/\Lambda$.

\item[$\textrm{Quad}(E)\le \textrm{Set}(E,\F)$] the subgroup of classical quadratic forms, i.e. the maps
$Q:E\to\F$ such that $$Q(av)=a^2Q(v)$$ for all $v\in E$, $a\in\F$ and 
such that $B_Q:E\x E\to \F$ defined by  $$B_Q(v,w)=Q(v+w)-Q(v)-Q(w)$$ 
is bilinear. 
\end{description}
\end{Def}

\medskip

 Just like $\Forms(E,\sigma,\eps,\Lambda)$, the groups $\textrm{Herm}_{\s,\eps}(E)$, $\textrm{Alt(E)}$,  $\textrm{Set}_{\s}(E,\F/\Lambda)$ and  $\textrm{Quad}(E,\F)$ 
assemble to give  contravariant functors from the category of vector spaces and linear maps to the category of abelian groups.

\begin{thm}\label{thm:qdetermined}(see also \cite[Thm 1]{Wallaxiomatic})
Let $E$ be a vector space.  For $q\in \Forms(E,\s,\eps,\Lambda)$, setting
 \[ \omega_q(v,w)=q(v,w)+\eps\overline{q(w,v)} \ \ \ \textrm{and}\ \ \  Q_q(v)=q(v,v)+\Lambda\in\F/\Lambda, \]
defines a map 
$$\chi=(\omega_{(-)},Q_{(-)}): \Forms(E,\sigma,\eps,\Lambda) \rar \textrm{Herm}_{\s,\eps}(E)\times \textrm{Set}(E,\F/\Lambda)$$
which is an injective homomorphism,  natural with respect to linear maps $E\to E'$.
In particular, if $(E,q)$ and $(E',q')$ are formed spaces, and $f:E\to E'$ a linear map, then $f$ is an isometry if and only if
\[ \omega_{q'}(f(v),f(w))=\omega_q(v,w)  \ \ \  \ \textrm{and}\ \ \ \ Q_{q'}(f(v))=Q_{q}(v) \]
for all $v,w\in E$.
Moreover, for any $q\in \Forms(E,\s,\eps,\Lambda)$, the maps $\omega_q$ and $Q_q$ are related by the following equation: 
\begin{equation}\label{equ:Qomega}
Q_q(v+w)-Q_q(v)-Q_q(w)=\omega_q(v,w) \in \F/\Lambda.
\end{equation}
\end{thm}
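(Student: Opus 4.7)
The plan is to verify the six claims in sequence: well-definedness of $\chi$, that $\omega_q$ is $\eps$-Hermitian, additivity and naturality, the relation \eqref{equ:Qomega}, injectivity, and finally the isometry characterization (which falls out of naturality plus injectivity).

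First I would lift $\chi$ to a map on sesquilinear forms, defining
\[ \tilde\chi(q) = (\omega_q, Q_q) \in \operatorname{Herm}_{\s,\eps}(E)\times\operatorname{Set}(E,\F/\Lambda), \]
and check that its target is correct. Biadditivity of $\omega_q$ is clear, and the sesquilinear identity for $(v,w)\mapsto \overline{q(w,v)}$ follows from $\overline{q(bw,av)} = \overline{\bar b q(w,v)a} = b\overline{q(w,v)}\bar a = \bar a\bigl(\overline{q(w,v)}\bigr)b$ using commutativity of $\F$. The $\eps$-Hermitian identity then comes down to
\[ \eps\overline{\omega_q(w,v)} = \eps\overline{q(w,v)} + \eps\bar\eps q(v,w) = \omega_q(v,w), \]
using $\eps\bar\eps = 1$. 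That $Q_q$ lands in $\F/\Lambda$ is trivial. Additivity in $q$ and naturality under $f:E\to E'$ are immediate from the formulas, since $f^*$ commutes with addition, with $\s$-conjugation of values, and with the reduction mod $\Lambda$.

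Next I would establish \eqref{equ:Qomega} at the level of sesquilinear representatives. Expanding $q(v+w,v+w)$ and subtracting $q(v,v)+q(w,w)$ gives $q(v,w)+q(w,v)$, so
\[ Q_q(v+w)-Q_q(v)-Q_q(w) - \omega_q(v,w) = q(w,v)-\eps\overline{q(w,v)} \in \La_{\min}\le\La, \]
which vanishes in $\F/\La$. This identity also motivates the claim that $\tilde\chi$ factors through the quotient by $X(E,\s,\eps,\La)$: if $q\in X$, then $q(w,v)=-\eps\overline{q(v,w)}$ yields $\eps\overline{q(w,v)} = -\eps\bar\eps q(v,w) = -q(v,w)$, so $\omega_q\equiv 0$; and $q(v,v)\in\La$ gives $Q_q\equiv 0$. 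Hence $\chi$ is well-defined on $\Forms(E,\s,\eps,\La)$.

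For injectivity I would read these two implications backwards: if $\omega_q = 0$ then $q(v,w) = -\eps\overline{q(w,v)}$ for all $v,w$, and if $Q_q = 0$ then $q(v,v)\in\La$ for all $v$, which are exactly the two defining conditions of $X(E,\s,\eps,\La)$. Thus $q\in X$, and $\chi$ is injective. Finally, the isometry statement is a formal consequence: $f:E\to E'$ is by definition an isometry iff $f^*q' = q$ in $\Forms(E,\s,\eps,\La)$; by naturality of $\chi$ this is $\chi(f^*q') = \chi(q)$, i.e.\ $f^*\omega_{q'} = \omega_q$ and $f^*Q_{q'} = Q_q$, and by injectivity of $\chi$ this last condition is equivalent to $f^*q' = q$. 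The only mild obstacle I anticipate is bookkeeping with $\eps\bar\eps=1$ and ensuring the correction term in \eqref{equ:Qomega} really lies in $\La_{\min}$ rather than just in $\La_{\max}$; both come out cleanly from the definitions.
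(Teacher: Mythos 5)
Your proposal is correct and takes essentially the same approach as the paper's proof: both rest on the observation that $\chi(q)=0$ precisely when $q\in X(E,\s,\eps,\La)$ (giving well-definedness and injectivity at once), derive equation \eqref{equ:Qomega} from $q(w,v)-\eps\overline{q(w,v)}\in\Lambda_{min}\le\La$, and obtain the isometry characterization formally from naturality plus injectivity of $\chi$. You simply spell out the routine verifications (sesquilinearity of $\eps\overline{q(w,v)}$, the $\eps$-Hermitian identity, additivity) that the paper compresses into ``one checks.''
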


\begin{ex}
(1) When $\s=\id$ and $\eps=1$, the form $Q_{q_{\mathcal{E}}}$ associated to the Eucledian form of Example~\ref{ex:forms} is the standard quadratic form $Q_{q_{\mathcal{E}}}(v)=\sum_{i=1}^nv_i^2$ on $\F^n$, with 
$\omega_{q_{\mathcal{E}}}=2q_{\mathcal{E}}$ its associated symmetric bilinear form. Note that $\omega_{q_{\mathcal{E}}}$ is zero in characteristic 2!

\smallskip 

\noindent (2) When $\s=\id$ and $\eps=-1$, the  form $q_{q_{\mathcal{H}}}$ associated to the hyperbolic form of Example~\ref{ex:forms} is (up to reordering of the basis vectors) the standard symplectic form on $\F^{2n}$, namely 
$\omega_{q_{\mathcal{H}}}(v,w)=\sum_{i=1}^nv_{2i-1}w_{2i}-w_{2i-1}v_{2i}$. 
\end{ex}

\begin{proof}[Proof of Theorem~\ref{thm:qdetermined}]
The map $\chi$ 
is well-defined and injective since
 $\omega_f=0$ and $Q_f(v)\in \Lambda$ if and only if $f\in X(E,\sigma,\eps,\Lambda)$.
Also, for any $q\in\Forms(E,\s,\eps,\Lambda)$, we have that  $\omega(v,w)=\eps \overline{\omega(w,v)}$ for all $v,w\in E$, so the first component of $\chi$ has image in $\textrm{Herm}_{\s,\eps}(E)$ as claimed. 
One checks that the assignment
is a group homomorphism that is natural with respect to vector space homomorphisms.
The characterization of the isometries follows directly from naturality and injectivity of $\chi$. 
Finally, the last equation follows from the fact that $q(w,v)-\eps \overline{q(w,v)}\in \Lambda_{min}$. 
\end{proof}

The group of linear automorphisms $\G(E)$ acts on both the domain and codomain of $\chi$, and naturality implies that $\chi$ commutes with this action.  
The isotropy group
of an element $q\in\Forms(E,\sigma,\eps,\Lambda)$ is the group $\Gi(E,q)$ of bijective isometries of $(E,q)$. It follows from the above result that 
$$\Gi(E,q)=\Aut(E,\omega_q)\cap \Aut(E,Q_q)$$
is exactly the group of maps preserving both $\omega_q$ and $Q_q$. The following result show that in fact one of those two conditions is always redundant and the automorphism group
$\Gi(E,q)$ identifies with either $\Aut(E,\omega_q)$ or $\Aut(E,Q_q)$:

\begin{prop}\label{prop:inject}
(1)~If $\Lambda=\Lambda_{max}$, 
then $Q_q$ is determined by $\omega_q$, in the sense that 
the assignment $$\omega_{(-)}: \Forms(E,\sigma,\eps,\Lambda) \rar \textrm{Herm}_{\s,\eps}(E)$$ 
of Theorem~\ref{thm:qdetermined} is injective.
In particular, $\Gi(E,q)=\Aut(E,\omega_q)$ as subgroups of $\G(E)$ in this case.

\noindent
(2)~If $\Lambda\neq \F$, then
$\omega_q$ is determined by $Q_q$, in the sense that 
the assignment
$$Q_{(-)}:\Forms(E,\sigma,\eps,\Lambda) \rar  \textrm{Set}(E,\F/\Lambda)$$
of Theorem~\ref{thm:qdetermined}
is injective.
In particular, $\Gi(E,q)=\Aut(E,Q_q)$  as subgroups of $\G(E)$ in this case.
\end{prop}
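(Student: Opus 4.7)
The plan is to reduce each part to the injectivity of $\chi=(\omega_{(-)},Q_{(-)})$ established in Theorem~\ref{thm:qdetermined}: it then suffices to show that, under the hypothesis of each part, the ``missing'' coordinate of $\chi$ is actually determined by the one under consideration. The identifications of $\Gi(E,q)$ follow from naturality of $\chi$ together with the already-observed equality $\Gi(E,q)=\Aut(E,\omega_q)\cap\Aut(E,Q_q)$.

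\textbf{Part (1).} Suppose $\omega_q=\omega_{q'}$ and set $x(v):=q(v,v)-q'(v,v)$. Subtracting the defining formulas for $\omega_q(v,v)$ and $\omega_{q'}(v,v)$ gives $x(v)+\eps\,\overline{x(v)}=0$, so $x(v)\in\Lambda_{max}=\Lambda$, whence $Q_q(v)=Q_{q'}(v)$. Injectivity of $\chi$ then forces $q=q'$.

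\textbf{Part (2).} Let $f:=q-q'\in\operatorname{Sesq}_\sigma(E)$; the hypothesis translates to $f(v,v)\in\Lambda$ for all $v$, and the goal is to show $\alpha(v,w):=f(v,w)+\eps\,\overline{f(w,v)}=0$. The main move is polarization: applying the hypothesis to $v+tw$ for $t\in\F$ and subtracting $f(v,v)\in\Lambda$ together with $f(tw,tw)=t\bar t\,f(w,w)\in\Lambda$ (the latter since $tw\in E$) yields
\[ t\,f(v,w)+\bar t\,f(w,v)\in\Lambda\subseteq\Lambda_{max}\qquad\forall\,t\in\F. \]
Rewriting via the defining relation $y+\eps\,\bar y=0$ of $\Lambda_{max}$ and regrouping, this becomes $t\,\alpha(v,w)+\bar t\,\eps\,\overline{\alpha(v,w)}=0$; the case $t=1$ already gives $\alpha\in\Lambda_{max}$, i.e.\ $\eps\,\bar\alpha=-\alpha$.

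If $\sigma\neq\id$, substituting $\eps\,\bar\alpha=-\alpha$ into the identity gives $(t-\bar t)\,\alpha=0$ for all $t$; choosing any $t$ with $t\neq\bar t$ forces $\alpha=0$. If $\sigma=\id$, the polarization relation collapses to $t(f(v,w)+f(w,v))\in\Lambda$ for all $t\in\F$; since no nonzero element of $\F$ generates a proper additive subgroup under $\F$-scaling, the hypothesis $\Lambda\neq\F$ forces $f(v,w)+f(w,v)=0$. A short inspection of $\Lambda_{max}$ when $\sigma=\id$ (cf.\ Proposition~\ref{prop:Lambda}) shows that $\Lambda\neq\F$ forces $\eps=1$, so $\alpha(v,w)=f(v,w)+f(w,v)=0$ in this case as well.

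\textbf{Main obstacle.} The delicate scenario is $\sigma=\id$ in characteristic $2$, where $\bar t=t$ and $1+\eps=0$ make the polynomial-in-$t$ identity over $\Lambda_{max}$ degenerate, so the $\Lambda_{max}$ relation alone gives no information about $\alpha$. Here the hypothesis $\Lambda\neq\F$ must be exploited at the $\Lambda$-level rather than the $\Lambda_{max}$-level, via the elementary observation about $\F$-scaling above; this is precisely why the hypothesis of~(2) is stated as $\Lambda\neq\F$ and not as the weaker $\Lambda\neq\Lambda_{max}$.
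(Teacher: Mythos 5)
Your proof is correct, and part (1) is essentially the paper's argument: the paper uses additivity of $\chi$ to reduce to showing $\omega_q=0$ implies $Q_q=0$, which is exactly your computation with the difference $x(v)=q(v,v)-q'(v,v)$ landing in $\Lambda_{max}=\Lambda$.

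In part (2) you reach the same goal by a genuinely different endgame. The paper polarizes only at $t=1$: from $q(u+v,u+v)\equiv q(u,u)\equiv q(v,v)\equiv 0 \bmod\Lambda$ and $q(v,u)\equiv\eps\overline{q(v,u)}\bmod\Lambda_{min}$ it gets $\omega_q(u,v)\in\Lambda$ for all $u,v$, and then finishes uniformly for every $\sigma$ with a single scaling argument: $\omega_q(u,cv)=\omega_q(u,v)c\in\Lambda$ for all $c$, and a nonzero element of the field $\F$ generates all of $\F$, contradicting $\Lambda\neq\F$. You instead polarize with a general parameter $t$, upgrade the $\Lambda_{max}$-membership $t f(v,w)+\bar t f(w,v)\in\Lambda_{max}$ to the exact identity $t\,\alpha+\bar t\,\eps\bar\alpha=0$ (using $\alpha(w,v)=\eps\overline{\alpha(v,w)}$, which I checked is right, via $\eps\bar\eps=1$), and then split on $\sigma$: for $\sigma\neq\id$ you get $(t-\bar t)\alpha=0$ and hence $\alpha=0$ without ever using $\Lambda\neq\F$ --- which is consistent, since Proposition~\ref{prop:Lambda} shows $\Lambda\le\Lambda_{max}\neq\F$ is automatic in that case --- while for $\sigma=\id$ you fall back on the same $\F$-scaling observation as the paper, supplemented by the (correct) remark that $\Lambda\neq\F$ forces $\eps=1$ when $\sigma=\id$. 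What your route buys is precision about hypotheses: it isolates that $\Lambda\neq\F$ is only genuinely needed when $\sigma=\id$ (and, as your ``main obstacle'' paragraph rightly notes, only in characteristic $2$, since otherwise $\Lambda=0$); what the paper's route buys is brevity and uniformity, with no case analysis at all. One detail in your favour: you justify $f(tw,tw)\in\Lambda$ by applying the hypothesis to the vector $tw$ itself rather than assuming $\Lambda$ is stable under multiplication by $t\bar t$ --- a necessary care here, since the paper's $\Lambda$ is only required to be an additive subgroup.
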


\begin{ex}\label{ex:groups} Recall the Eucledian and hyperbolic forms $q_{\mathcal{E}}$ and $q_{\mathcal{H}}$ of Example~\ref{ex:forms}. 

\smallskip

\noindent
(1) The group $\Gi(\F^{2n},q_{\mathcal{H}})$ for different choices of parameters $(\s,\eps,\La)$ identifies with classical groups as follows: \\
\begin{tabular}{lll}
$(\s,\eps,\La)=(\id,-1,\F)$ & $\Gi(\F^{2n},q_{\mathcal{H}}\!)\!=\!\Aut(\F^n,\omega_{q_\mathcal{H}}\!)$&$=\Sp_{2n}(\F)$\\ 
 $(\s,\eps,\La)=(\neq \id,1,\F^\s)$ & $\Gi(\F^{2n},q_{\mathcal{H}}\!)\!=\!\Aut(\F^n,Q_{q_\mathcal{H}}\!)\!=\!\Aut(\F^n,\omega_{q_\mathcal{H}}\!)$\!&$=\UU_{n,n}(\F)$\\ 
$(\s,\eps,\La)=(\id,1,0)$ & $\Gi(\F^{2n},q_{\mathcal{H}}\!)\!=\!\Aut(\F^n,Q_{q_\mathcal{H}}\!)$&$=\OO_{n,n}(\F)$ 
\end{tabular}

\smallskip

\noindent
(2) The group $\Gi(\F^{n},q_{\mathcal{E}})$ for different choices of parameters $(\s,\eps,\La)$ identifies with classical groups as follows: \\
\begin{tabular}{lll}
 $(\s,\eps,\La)=(\neq \id,1,\F^\s)$ & $\Gi(\F^{n},q_{\mathcal{E}})=\Aut(\F^n,Q_{q_\mathcal{E}})=\Aut(\F^n,\omega_{q_\mathcal{E}})$&$=\UU_{n}(\F)$\\ 
$(\s,\eps,\La)=(\id,1,0)$ & $\Gi(\F^{n},q_{\mathcal{E}})=\Aut(\F^n,Q_{q_\mathcal{E}})$&$=\OO_{n}(\F)$ 
\end{tabular}\\
the latter under the additional assumption that $\textrm{char}(\F)\neq 2$ for the form $q_{\mathcal{E}}$ to be non-degenerate. 
The ``symplectic version'' of the Euclidean groups would be the general linear group $\G(E)$ because the form $q_{\mathcal{E}}$ is trivial with the parameters  $(\s,\eps,\La)=(\id,-1,\F)$. 
\end{ex}

\begin{proof}[Proof of Proposition~\ref{prop:inject}]
We first prove (1). Since we know $q\mapsto \chi(q)=(\omega_q,Q_q)$ is additive and injective, it suffices to check that
$\omega_q=0$ implies $Q_q=0$.  Indeed, for $v\in E$,
\[ 0=\omega_q(v,v)=q(v,v)+\eps\overline{q(v,v)}, \]
so
\[ q(v,v)\in\set{a\in\F}{a+\eps\overline{a}=0} = \Lambda_{max}=\Lambda, \]
meaning $Q_q(v)=q(v,v)+\Lambda=0\in \F/\Lambda$, proving (1).

For statement (2), we assume $\Lambda\neq \F$ is a proper additive subgroup. 
By the additivity and injectivity of $\chi$,
it suffices now to check that
$Q_q=0$ implies $\omega_q=0$.  
The condition $Q_q=0$ is equivalent to $q(v,v)\in \Lambda$ for every $v\in E$. It follows that 
\begin{align*}
q(u,v)&= q(u+v,u+v)-q(u,u)-q(v,v)-q(v,u) \\
&= -q(v,u) \mod \Lambda\\
&= -\eps\overline{q(v,u)} \mod \Lambda, 
\end{align*}
where the last equality holds as $q(v,u)-\eps\overline{q(v,u)} \in \Lambda_{min}\le \Lambda$. 
Hence $\omega(u,v)\in \Lambda$ for every $u,v\in E$. Scaling $v$ by a any element in $\F$ defines an ideal of $\Lambda$ generated by $\omega(u,v)$. As $\Lambda$ has no non-trivial ideal, it follows that 
$\omega(u,v)=0$, which proves (2). 
\end{proof}

Finally, we prove Theorem~\ref{introthm:qspecialcases}. The proof  will use two lemmas, which we state and prove now.

\begin{lem}\label{lem:surject}
(1)~The assignment $$\omega_{(-)}: \Forms(E,\sigma,\eps,\Lambda) \rar \textrm{Herm}_{\s,\eps}(E)$$ 
of Theorem~\ref{thm:qdetermined} is surjective 
unless $\s=\id$ and $\F$ has is a field of characteristic $2$, in which case the image is the subspace $\textrm{Alt}(E)$ of alternating bilinear forms. 

\smallskip

\noindent
(2)~If $\sigma=\id$ 
the assignment
$$Q_{(-)}:\Forms(E,\sigma,\eps,\Lambda) \rar  \textrm{Set}(E,\F/\Lambda)$$
of Theorem~\ref{thm:qdetermined} has image the {\em mod $\Lambda$ quadratic forms}, i.e.~the set maps $Q:E\to \F/\Lambda$ such that  $Q(av)=a^2Q(v)$ for all $v\in E$, $a\in\F$ and 
$B_Q(v,w)=Q(v+w)-Q(v)-Q(w)$ is a bilinear function. In particular, if in addition $\Lambda=0$, then $Q_{(-)}$ has image 
$\textrm{Quad}(E)$. 
\end{lem}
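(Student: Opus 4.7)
The plan is to treat (1) by finding, whenever possible, an element $\alpha\in\F$ with $\alpha+\bar\alpha=1$ and taking $q=\alpha\omega$. A direct computation using $\eps\bar\eps=1$ and the $\eps$-Hermitian relation $\omega(w,v)=\eps\overline{\omega(v,w)}$ shows
\[\omega_q(v,w)=\alpha\omega(v,w)+\eps\bar\alpha\overline{\omega(w,v)}=(\alpha+\bar\alpha)\omega(v,w),\]
so $q$ does the job. When $\sigma=\id$ and $\car(\F)\ne2$, I take $\alpha=\tfrac12$. When $\sigma\neq\id$, Artin's theorem gives $[\F:\F^\sigma]=2$, and this extension is automatically \emph{separable} (in characteristic~2 an inseparable quadratic extension would force $\bar\theta=\theta$ on a generator, contradicting $\sigma\neq\id$), so the trace map $\alpha\mapsto\alpha+\bar\alpha$ surjects onto $\F^\sigma\ni1$; pick any $\alpha$ in the fiber over $1$.

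The remaining exceptional case is $\sigma=\id$ and $\car(\F)=2$, where $\alpha+\bar\alpha=2\alpha=0$ and the trick breaks down. Here one first checks that the image lies in $\mathrm{Alt}(E)$: for any bilinear $q$, $\omega_q(v,v)=(1+\eps)q(v,v)=0$ since $\eps=\pm1=1$. For the reverse inclusion, pick a basis $\{e_i\}_{i\in I}$ of $E$ with a well-ordering on $I$ (Zorn if needed), and given an alternating $\omega$ define $q$ bilinearly by $q(e_i,e_j)=\omega(e_i,e_j)$ for $i<j$ and $q(e_i,e_j)=0$ otherwise. Then $\omega_q=\omega$ follows from the alternating identity $\omega(e_j,e_i)=\omega(e_i,e_j)$ in characteristic~2.

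For (2), one direction is immediate from Theorem~\ref{thm:qdetermined}: if $q\in\Forms(E,\sigma,\eps,\Lambda)$ with $\sigma=\id$, then $Q_q(av)=a^2 q(v,v)+\Lambda=a^2Q_q(v)$, and the displayed equation~\eqref{equ:Qomega} identifies $B_{Q_q}$ with $\omega_q\bmod\Lambda$, which is bilinear. For surjectivity, given a mod-$\Lambda$ quadratic form $Q$, choose a well-ordered basis $\{e_i\}$ and for each $i$ pick a lift $\widetilde Q(e_i)\in\F$ of $Q(e_i)\in\F/\Lambda$; define $q$ bilinearly by $q(e_i,e_i)=\widetilde Q(e_i)$, $q(e_i,e_j)=B_Q(e_i,e_j)$ for $i<j$, and $q(e_i,e_j)=0$ for $i>j$. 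Expanding $q(v,v)$ for $v=\sum a_ie_i$ and using bilinearity of $B_Q$ together with the quadratic refinement identity $Q(\sum a_ie_i)=\sum a_i^2Q(e_i)+\sum_{i<j}a_ia_jB_Q(e_i,e_j)$ (proved by induction on the support using $Q(u+w)=Q(u)+Q(w)+B_Q(u,w)$) gives $Q_q=Q$. The final assertion in (2) is the observation that when $\Lambda=0$, the mod-$\Lambda$ quadratic forms are literally the classical quadratic forms $\textrm{Quad}(E)$.

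The only real obstacle is the separability point in case $\sigma\neq\id$ with $\car(\F)=2$; once one notes that any nontrivial involution on a field forces the fixed-field extension to be separable of degree~$2$, the trace-surjectivity argument goes through uniformly and everything else is bookkeeping.
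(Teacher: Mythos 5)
Your proposal is correct, and part (2) is essentially the paper's own argument (order a basis, lift $Q(e_i)$ and $B_Q(e_i,e_j)$ to $\F$, define $q$ upper-triangularly, and compare), so the interesting difference is in part (1). In the non-exceptional cases the paper does \emph{not} average: it runs the same triangular basis construction as in part (2), setting $q(x_i,x_j)=\omega(x_i,x_j)$ for $i<j$, $0$ for $i>j$, and solving $a_i+\eps\bar a_i=\omega(x_i,x_i)$ for the diagonal entries, the solvability being exactly the identity $\Lambda_{min}(\F,\s,-\eps)=\set{a+\eps\bar a}{a\in\F}=\set{b\in\F}{b=\eps\bar b}=\Lambda_{max}(\F,\s,-\eps)$ from Proposition~\ref{prop:Lambda}. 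Your global construction $q=\alpha\omega$ with $\alpha+\bar\alpha=1$ is a genuinely different route, but the two rest on the same underlying fact: the appendix's proof of Proposition~\ref{prop:Lambda} for $\s\neq\id$ invokes nonvanishing of the trace of $\F/\F^\s$ (linear independence of characters), which is precisely what your separability observation re-derives --- and your remark that a nontrivial involution forces $\F/\F^\s$ to be separable of degree $2$ is sound (Artin's theorem, or your direct argument that an inseparable quadratic extension admits no nontrivial automorphism over the base). What your route buys: it is basis-free, needs no choice in the non-exceptional cases, and makes transparent why $\s=\id$ in characteristic $2$ is the unique exception (there the trace $\alpha\mapsto2\alpha$ vanishes identically); what the paper's route buys is uniformity, since the identical triangular construction also handles that exceptional case, where you too must fall back on it. Two cosmetic points only: when defining $q$ in part (2) you should say explicitly that the off-diagonal values are \emph{lifts} of $B_Q(e_i,e_j)\in\F/\Lambda$ to $\F$, as you did for the diagonal (this is clearly what you intend); and your explicit expansion $Q\bigl(\sum_i a_ie_i\bigr)=\sum_i a_i^2Q(e_i)+\sum_{i<j}a_ia_jB_Q(e_i,e_j)$ is in fact slightly more careful than the paper's proof, which checks agreement on basis vectors and the addition formula while leaving the scaling behaviour implicit.
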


\begin{proof}
To prove (1), note first that 
in the exceptional case ($\s=\id$ in characteristic 2), the form $\omega_q$ is indeed alternating, since we necessarily have $\eps=-1$ in this case, which forces
\[ \omega_q(v,v)=q(v,v)+\eps q(v,v)=0. \]
Now, returning to the general case,  let  $\omega$ be any element of $\textrm{Herm}_{\s,\eps}(E)$,  assumed alternating in the special case.
Arbitrarily pick a basis $\{x_i\}$ of $E$, and define
\[ q(x_i,x_j)=\begin{cases}
\omega(x_i,x_j) &\text{if }i<j,\\
a_i &\text{if }i=j,\\
0 &\text{if }i>j,
\end{cases} \]
extending sesquilinearly to obtain an element $q\in \textrm{Sesq}_\sigma(E)$.
Here $a_i\in\F$ is a scalar chosen so that
\[ a_i+\eps\bar a_i=\omega(x_i,x_i). \]
This is possible in the special case by setting $a_i=0$. In the other cases, applying Proposition~\ref{prop:Lambda} we get 
\[ \Lambda_{min}(\F,\s,-\eps)=\set{a+\eps\bar a}{a\in\F}=\set{b\in\F}{b=\eps\bar b}=\Lambda_{max}(\F,\s,-\eps) \]
and $\omega(x_i,x_i)$ lies in the right hand side, so such an $a_i$ does exists. 
The form $q$ satisfies that $\omega_q=\omega$ in all cases.

For (2), we now assume that $\s=\id$. 
Given a formed space $(E,q)$, we first check that  $Q_q$ is a mod $\Lambda$ quadratic form, i.e. that 
$$B_{Q_q}(v,w):=Q_q(v+w)-Q_q(v)-Q_q(w)$$ is bilinear. But $B_{Q_q}(x,y)=\omega_q\mod \Lambda$, which is bilinear under the assumption that $\s=\id$. 

Conversely, assume that $Q:E\to \F/\Lambda$ is a quadratic form (mod $\Lambda$). We want to show that it is in the image of $Q_{(-)}$.   
Arbitrarily pick a basis $\{x_i\}$ of $E$, and pick $b_{ij},c_i\in \F$ such that $B_Q(x_i,x_j)=b_{ij}\mod \Lambda$ and $Q(x_i)=c_i\mod\Lambda$. 
Then define
\[ q(x_i,x_j)=\begin{cases}
b_{ij} &\text{if }i<j,\\
c_i &\text{if }i=j,\\
0 &\text{if }i>j,
\end{cases} \]
extending bilinearly to obtain an element $q\in \textrm{Sesq}_{\id}(E)$.
For each $i$, we have $Q_q(x_i)=Q(x_i)$ by construction.
Now  $$\omega_q(x_i,x_j)=q(x_i,x_j)+\eps q(x_j,x_i)=q(x_i,x_j)+ q(x_j,x_i)\mod \Lambda$$ as $q(x_j,x_i)=\eps q(x_j,x_i)\mod\Lambda_{min}$. 
Hence $\omega_q(x_i,x_j)=B_Q(x_i,x_j)\mod \Lambda$  if $i\neq j$, 
while
\[ \omega_q(x_i,x_i)=2q(x_i,x_i)=2Q(x_i)\mod\Lambda \]
and
\[ B_Q(x_i,x_i)=Q(2x_i)-2Q(x_i)=2Q(x_i). \]
It follows that $\omega_q=B_Q$, while $Q_q$ and $Q$ coincide on a basis. This implies that $Q_q=Q$ because 
$Q(v+w)=Q(v)+Q(w)+B_Q(v,w)$
and likewise $Q_q(v+w)=Q_q(v)+Q_q(w)+\omega_q(v,w)\mod \Lambda$,
finishing the proof. 
\end{proof}

\begin{lem}\label{lem:epsilonredundant}
If $\s\neq \id$, there exists an $a=a(\eps)\in \F^\times$ such that multiplication by $a$ induces an isomorphism 
$$\Forms(E,\sigma,\eps,\Lambda) \sta{\cong}\rar \Forms(E,\sigma,1,a\Lambda).$$
\end{lem}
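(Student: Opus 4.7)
The plan is to reduce the question to finding a single scalar $a \in \F^\times$ satisfying $\bar a = \eps a$ (equivalently $\bar a/a = \eps$), and then to show that the map $q \mapsto aq$ on sesquilinear forms descends to the desired isomorphism of quotients. The motivation is the direct computation: if $f(w,v) = -\eps\overline{f(v,w)}$, then $af(w,v) = -a\eps\overline{f(v,w)} = -\bar a\,\overline{f(v,w)} = -\overline{af(v,w)}$ exactly when $a\eps = \bar a$. So the whole lemma hinges on producing such an $a$.

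\textbf{Existence of $a$.} This is essentially Hilbert 90 for the degree-$2$ extension $\F/\F^\s$, but I would prove it directly. Pick any $x \in \F$ and set $a = x + \bar\eps\bar x$. A one-line calculation using $\eps\bar\eps = 1$ and $\s^2 = \id$ gives $\bar a = \bar x + \eps x = \eps a$. The only worry is that $a$ might vanish for all $x$; but $x + \bar\eps\bar x = 0$ for all $x$ forces $\sigma(x) = -\eps x$, and demanding this be multiplicative combined with $\eps\bar\eps=1$ forces $\eps=-1$ and hence $\s = \id$, contradicting the hypothesis $\s\ne\id$. So some $x$ yields $a\ne 0$.

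\textbf{Well-definedness on quotients.} The map $q \mapsto aq$ is an $\F^\s$-linear bijection of $\operatorname{Sesq}_\s(E)$ with inverse $q \mapsto a^{-1}q$. I would verify that it sends $X(E,\s,\eps,\Lambda)$ into $X(E,\s,1,a\Lambda)$: the diagonal condition $f(v,v)\in\Lambda$ becomes $(af)(v,v)\in a\Lambda$ by definition, and the skewness condition transforms as in the motivating calculation above using $a\eps=\bar a$. Symmetrically, multiplication by $a^{-1}$ sends $X(E,\s,1,a\Lambda)$ back into $X(E,\s,\eps,\Lambda)$ using the identity $\bar a^{-1} = \eps^{-1} a^{-1}$ (which follows from $\bar a = \eps a$).

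\textbf{Compatibility with the $\La$ bounds.} I would also briefly check that $a\Lambda$ is a legitimate parameter for $(\F,\s,1)$, i.e. that $\La_{\min}(\F,\s,1) \le a\La \le \La_{\max}(\F,\s,1)$. Applying multiplication by $a$ to the defining relations, a direct substitution shows $a\cdot\La_{\min}(\F,\s,\eps) = \La_{\min}(\F,\s,1)$ and $a\cdot\La_{\max}(\F,\s,\eps) = \La_{\max}(\F,\s,1)$, so the bounds transport correctly.

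The main obstacle is Step 1, the existence of $a$; everything else is routine verification. In particular the explicit formula $a = x + \bar\eps \bar x$ bypasses any need to invoke Hilbert 90 as a black box, which is nice because the rest of the paper is written in elementary terms.
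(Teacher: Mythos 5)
Your proof is correct, and its skeleton is the same as the paper's: both reduce the lemma to producing $a\in\F^\times$ with $\bar a=\eps a$, after observing that multiplication by $a$ carries $X(E,\s,\eps,\La)$ to $X(E,\s,a\bar a^{-1}\eps,a\La)$ and hence descends to the quotients. The genuine difference is in the key step: the paper simply invokes Hilbert's Theorem 90 for the degree-$2$ extension $\F/\F^\s$, whereas you prove the needed special case by hand via $a=x+\bar\eps\bar x$, checking $\bar a=\bar x+\eps x=\eps a$ and ruling out identical vanishing. Your non-vanishing argument is valid and can even be shortened: if $x+\bar\eps\bar x=0$ for all $x$, i.e.\ $\s(x)=-\eps x$, then evaluating at $x=1$ (using only $\s(1)=1$) already gives $\eps=-1$ and hence $\s=\id$, contradicting the hypothesis; no appeal to multiplicativity beyond this is needed. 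This is essentially the classical elementary proof of Hilbert 90 in the quadratic case, and it buys self-containedness, in keeping with the paper's otherwise elementary style, at the cost of a few lines. Your final check that $a\La$ is an admissible parameter for $(\F,\s,1)$ --- via $a\,\La_{\min}(\F,\s,\eps)=\La_{\min}(\F,\s,1)$ and likewise for $\La_{\max}$, both immediate from $\bar a=\eps a$ --- is a point the paper leaves implicit; note it is also automatic from Proposition~\ref{prop:Lambda}, since $\s\neq\id$ forces $\La_{\min}=\La_{\max}$, so there is a unique admissible $\La$ on each side.
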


Note that $\Gi(E,q)=\Gi(E,aq)$ as subgroups of $\G(E)$.

\begin{proof}
Multiplication by $a\in \F$ takes $X(V,\sigma,\eps,\Lambda)$ to $X(V,\sigma,a\bar a\inv\eps,a\Lambda)$.
So we need to check that there exists an $a\in \F$ such that  $\eps=a\inv\bar a$.  
The existence of such an element  is given by applying Hilbert's Theorem 90 to the field extension $\F$ of $\F^\s$. 
\end{proof}

\begin{proof}[Proof of Theorem~\ref{introthm:qspecialcases}]
We start by proving (1). So assume $\s\neq \id$. Then by Proposition~\ref{prop:Lambda}, we must have $\Lambda=\Lambda_{max}$. Now by Lemma~\ref{prop:inject}(1), the map $\omega_{(-)}$ is injective and by Lemma~\ref{lem:surject} it is also surjective. Statement (1) then follows from Lemma~\ref{lem:epsilonredundant}. 

To prove (2), we now assume that $\s=\id$, $\eps=-1$ and $\Lambda=\F$. In particular, $\Lambda=\Lambda_{max}$ in all cases by Proposition~\ref{prop:Lambda}. 
Applying Lemma~\ref{prop:inject}(1) again, we have that the map $\omega_{(-)}$ is injective and by Lemma~\ref{lem:surject} it is also surjective onto $\textrm{Herm}_{\id,-1}(E)$ unless $\F$ has characteristic $2$ in which case the image is the subgroup of alternating forms $\textrm{Alt}(E)$. Now statement (2) follows from the fact that $\textrm{Herm}_{\id,-1}(E)$ and $\textrm{Alt}(E)$ are actually isomorphic if the field is not of characteristic 2, as $\omega\in \textrm{Herm}_{\id,-1}(E)$ satisfies that $\omega(v,v)=-\omega(v,v)$. 

For (3),  we now assume that $\s=\id$, $\eps=1$ and $\Lambda=0$. By  Proposition~\ref{prop:inject}(2), the map $Q_{(-)}$ is injective, and  by Lemma~\ref{lem:surject} it surjects onto the quadratic forms $\textrm{Quad}(E)$, which proves the result. 
\end{proof}

Theorem~\ref{introthm:qspecialcases} gives a description of almost all the possible formed space in the better-known terms of Hermitian, alternating or quadratic forms. As we saw in Remark~\ref{rem:almostall}, left is 
the case $\sigma=\id$, $p=2$, and $0<\Lambda<\F$.
We show now that at least for $\F$ finite, this case is in fact redundant, producing the same isometry groups as if $\Lambda=0$:

\begin{prop}\label{prop:perfectfield}
Assume $\F$ is a perfect field of characteristic 2.
Let $\sigma=\id$ and $\Lambda<\F$ be proper.  Then
\[ X(E,\id,1,\Lambda)=X(E,\id,1,0). \]
\end{prop}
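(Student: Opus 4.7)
The plan is to unpack what $X(E,\id,1,\Lambda)$ becomes under these hypotheses and then exploit the perfectness of $\F$ to pinch $\Lambda$ down to $0$ on the image of the diagonal.

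First I would observe that the inclusion $X(E,\id,1,0)\subseteq X(E,\id,1,\Lambda)$ is immediate since $0\le\Lambda$, so only the reverse inclusion needs work. With $\sigma=\id$, $\eps=1$, and $\car(\F)=2$, the condition $f(w,v)=-\eps\overline{f(v,w)}$ simply says that $f$ is a symmetric bilinear form; and then $X(E,\id,1,\Lambda)$ is the subspace of symmetric bilinear $f$ with $f(v,v)\in\Lambda$ for all $v\in E$, while $X(E,\id,1,0)$ is the alternating ones. So the task reduces to showing: if $f$ is symmetric bilinear on $E$ (over $\F$ perfect of characteristic $2$) and $f(v,v)\in\Lambda$ for every $v\in E$, then in fact $f(v,v)=0$ for every $v$.

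Fix such an $f$ and let $I=\{f(v,v):v\in E\}\subseteq\Lambda$. The main step is to identify $I$ as an $\F$-subspace of $\F$. For additivity, expand
\[ f(v+w,v+w)=f(v,v)+f(w,w)+2f(v,w)=f(v,v)+f(w,w) \]
using $\car(\F)=2$, so $I$ is closed under addition. For scalar multiplication, $f(av,av)=a^2f(v,v)$ shows $I$ is closed under multiplication by every square of $\F$. Here is where I invoke that $\F$ is perfect of characteristic $2$: the Frobenius is surjective, so every element of $\F$ is a square, and $I$ is therefore stable under multiplication by arbitrary elements of $\F$. Thus $I$ is an $\F$-linear subspace of $\F$, hence either $I=0$ or $I=\F$.

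Since $I\subseteq\Lambda\subsetneq\F$ by hypothesis, the second possibility is excluded, forcing $I=0$, i.e.\ $f(v,v)=0$ for all $v$, which gives $f\in X(E,\id,1,0)$. I expect the only subtle point to be the use of perfectness: without it, $I$ is only closed under multiplication by squares, and one could conceivably have a proper additive subgroup stable under squares but not under all scalars, so the statement genuinely uses that the Frobenius is surjective.
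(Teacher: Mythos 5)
Your proof is correct and follows essentially the same route as the paper: both arguments use surjectivity of the Frobenius to show that a single nonzero diagonal value $f(v,v)$, scaled by all squares $c^{-2}$, would force $\Lambda$ to contain all of $\F$, contradicting $\Lambda<\F$. Your extra verification that the set $I$ of diagonal values is closed under addition (via $\car(\F)=2$) is correct but superfluous, since the multiplicative orbit of one nonzero element already exhausts $\F^\times$ --- which is exactly the shortcut the paper takes.
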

\begin{proof}
What we must show is that, if $q(v,v)\in\Lambda$ for all $v\in V$, then in fact $q(v,v)=0$ for all $v$. 
For any $c\in\F^\times$, we have 
\[ q(c\inv v,c\inv v)=c^{-2}q(v,v)\in\Lambda. \]
Now, if $q(v,v)$ is nonzero, then every element of $\F$ is of that form for some $c$, because the squaring function on $\F$ is bijective.  That contradicts $\Lambda$ being a proper subset of $\F$.
\end{proof}

\begin{rem}
The proposition is false for imperfect fields of characteristic two.  For instance, for  $\F=\F_2(t)$,   
we have proper inclusions
\[ X(\F,\id,1,0)<X(\F,\id,1,\F_2(t^2))<X(\F,\id,1,\F_2(t)). \]
Indeed, the forms $q(a,b)=ab$ and $q'(a,b)=tab$  demonstrate properness of the two inclusions. 
\end{rem}

\section{Witt's Lemma}\label{sec:Witt}

In this section, we give a proof of Witt's lemma in the context of formed spaces, stated as Theorem~\ref{lem:Witt}, and its relative
version, stated as Theorem~\ref{thm:relativeWitt}. 

\medskip

Given a vector space $E$, recall that $^\s\! E^*$ denotes the vector space of
$\s$-skew-linear maps $E\to \F$, that is, additive maps $f:E\to \F$
such that $f(av)=\bar a f(v)$, with vector space structure defined
pointwise from that of $\F$. 
Recall also  the maps $\omega_q$ and $Q_q$ of Theorem~\ref{thm:qdetermined} associated to a form $q$.

\begin{defn}\label{def:kerneletc} Let $E=(E,q)$ be a formed space. \\
(1) The {\em kernel} of $E$ is defined to be the kernel
$\K(E) = \ker(\flat_q)$ 
of the associated linear map
\[ \flat_q: E\to {^\sigma\! E^*},\ v\mapsto\omega_q(-,v). \]
We say that $(E,q)$ is {\em non-degenerate} if $\K(E)=0$.

\smallskip

\noindent
(2) The \textit{orthogonal complement} 
$U^\perp$ of a subspace $U\leq E$ is  the subspace consisting of all $v\in E$ such that $\omega(v,U)=0$.  That is, $U^\perp$ is the kernel of the composition
\[ E\xrightarrow{\flat_q}{^\sigma\! E^*}\xrightarrow{incl^*}{^\sigma U^*}. \]

\smallskip

\noindent
(3) The \textit{radical} of $E$ is  the set
\[ \R(E)=\set{v\in\K(E)}{Q_q(v)=0}. \]

\smallskip

\noindent
(4) A subspace $U$ of $E$ is called \textit{isotropic} if $q|_U=0$ (or equivalently, $\omega_q|_U=0$ and $Q_q|_U=0$).

\end{defn}

\begin{rem}
(1) The radical $\R(E)$ is in fact a subspace of $E$, because $Q_q$ is additive on $\K(E)$, and satisfies $Q_q(cv)=c^2Q_q(v)$ for $c\in\F,v\in E$.
If the characteristic of $\F$ is not 2, then $\R(E)=\K(E)$, since equation (\eqref{equ:Qomega}) in that case gives that $\omega_q|_{\K(E)}=0$ implies $Q_q|_{\K(E)}=0$. 

\smallskip

\noindent
(2) Note that orthogonality is a symmetric relation: $\omega_q(v,w)=0$ if and only if $\omega_q(w,v)=0$, but note also that the orthogonal complement $U^\perp$ defined above will usually not be a complement in the sense of vector spaces. In fact,  if $U$ is isotropic, we actually have $U\subset U^\perp$! 
\end{rem}

\begin{ex}\label{ex:Cox}
To a Coxeter graph on $n$ vertices, one associates an $(n\x n)$--real
symmetric matrix (see eg., \cite[Sec 2.3]{Hum}), with corresponding symmetric bilinear form on $\mathbb{R}^n$. Hence we get an associated  formed space $(\mathbb{R}^n,q)$ over $\mathbb{R}$ with the parameters $(\s,\eps,\La)=(\id,1,0)$.  
This formed space defines a geometric representation of the corresponding Coxeter group, in the sense that the group acts by isometries on this space  (see eg., \cite[Sec 5.3]{Hum}). 
The form $q$ is non-degenerate if the group is finite, but can be degenerated when the group is infinite; 
see eg., \cite[Sec 2.5]{Hum} for examples of Coxeter graphs whose associated symmetric bilinear forms are degenerate. 
\end{ex}

The following result is a version of Witt's Lemma. Witt originally considered non-degenerate symmetric forms in characteristic not 2 \cite{Witt}. 
His result was generalize by eg., Tits \cite[Prop 2]{Tits} or Artin \cite[Thm 3.9]{Artin}, who considered non-degenerate alternating and quadratic forms, 
Dieudonn\'e \cite[p21,22 and 35]{Dieudonne}, 
who considers non-degenerate hermitian and quadratic forms, 
and Magurn-Vaserstein-van der Kallen \cite[Cor 8.3]{MVV} or  Petrov \cite[Thm 1]{Petrov} who both work in the more general context of rings and have a different set of assumptions, including an assumption on the {\em index} of the form, which counts the number of hyperbolic summands it contains.   The paper \cite{Huang} of Huang considers flags of subspaces in the case of non-degenerate symmetric forms in characteristic not 2.

\begin{thm}[Witt's Lemma]\label{lem:Witt}
Let $(E,q)$ be a formed space.
Suppose that $f:U\to W$ is a bijective isometry between two subspaces of $E$ such that $f\big(U\cap\K(E)\big)= W\cap\K(E)$. 
Then $f$ can be extended to a bijective isometry of $E$. 
In particular, the group $\Gi(E,q)$ of bijective isometries of $(E,q)$ acts transitively on the set of isotropic subspaces $U$ with given values of $\dim U$ and $\dim U\cap\R(E)$.
\end{thm}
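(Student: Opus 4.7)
The plan is to prove the extension claim by induction on $n = \dim U$, with the base case $n = 0$ trivial (take $\phi = \id_E$). The transitivity assertion then follows from the extension statement: if $U, W$ are isotropic subspaces of equal dimension with $\dim U \cap \R(E) = \dim W \cap \R(E)$, isotropy forces $Q_q|_U = 0$, hence $U \cap \K(E) \subseteq \R(E)$, so $U \cap \K(E) = U \cap \R(E)$ (and similarly for $W$). Any linear bijection $f: U \to W$ restricting to a bijection $U \cap \R(E) \to W \cap \R(E)$ is then automatically an isometry (both subspaces carry the zero form) satisfying the kernel hypothesis, and Witt's lemma applies.

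For the inductive step with $n \geq 1$, the main dichotomy is whether $U \subseteq \K(E)$. If $U \subseteq \K(E)$, the kernel hypothesis forces $W \subseteq \K(E)$ too, and one extends directly: pick any vector-space complement $V$ of $U$ in $E$ and set $\phi(u+v) = f(u)+v$. Preservation of $\omega_q$ is automatic because $\omega_q$ vanishes on $\K(E) \times E$; preservation of $Q_q$ follows from equation~(\ref{equ:Qomega}) together with $\omega_q(u, v) = 0 = \omega_q(f(u), v)$.

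If $U \not\subseteq \K(E)$, pick $u \in U$ with $u \notin \K(E)$ and let $w = f(u)$, which also lies outside $\K(E)$ by the kernel hypothesis. The strategy is to first construct a global isometry $\phi$ of $E$ with $\phi(u) = w$, then replace $f$ by $\phi^{-1}\circ f$ (so that $f$ fixes $u$) and apply the inductive hypothesis to $f|_{U_1}$, where $U_1$ is a complement of $\langle u \rangle$ in $U$ chosen to contain $U \cap \K(E)$ (this is possible since $u \notin \K(E)$, and ensures the kernel condition on $f|_{U_1}$ descends to the reduced problem). To construct $\phi$, choose $u^*, w^* \in E$ with $\omega_q(u^*, u) = 1 = \omega_q(w^*, w)$, adjust them by scalar multiples of $u$ and $w$ so that $Q_q(u^*) = Q_q(w^*) \bmod \Lambda$, define $\phi$ on the non-degenerate plane $\langle u, u^*\rangle$ by $u \mapsto w$ and $u^* \mapsto w^*$, and piece this together with an isometry between the orthogonal complements of the two planes.

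The main obstacles I anticipate are threefold: (i) the adjustment step amounts to solving an equation of the shape $c^2 Q_q(w) + c = Q_q(u^*) - Q_q(w^*)$ in $\F/\Lambda$, whose solvability depends on $(\F, \s, \eps, \Lambda)$ and in general may require either a smarter choice of $u^*, w^*$ or a detour through Eichler--Siegel-type transformations; (ii) producing an isometry between the two orthogonal complements is itself a Witt-type problem, so the proof likely needs either a secondary induction on $\dim E$ or a strengthened inductive hypothesis handling the extension problem simultaneously with the cancellation problem; and (iii) after the replacement $f \mapsto \phi^{-1} \circ f$, reconciling the inductive extension of $f|_{U_1}$ with the constraint that $f$ fixes $u$ requires extra bookkeeping, since the inductive extension does not automatically fix $u$. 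Careful tracking of how the kernel condition $f(U \cap \K(E)) = W \cap \K(E)$ propagates through each of these steps, and in particular through the passages to orthogonal complements whose formed-space kernels need not coincide with $\K(E)$ intersected with the complement, is where the bulk of the technical work will lie.
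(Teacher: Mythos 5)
Your deduction of the transitivity claim from the extension claim is fine, and the top-level induction on $\dim U$ matches the paper's; but the proposal has one concrete error and otherwise defers exactly the steps that constitute the theorem. The error is in the case $U\le\K(E)$: extending by the identity on an \emph{arbitrary} complement $V$ of $U$ does not give a bijection. If $0\neq w\in V\cap W$, then with $u=f^{-1}(w)\in U$ and $v=-w\in V$ one gets $\phi(u+v)=f(u)+v=w-w=0$ while $u+v=f^{-1}(w)-w\neq0$ (as $U\cap V=0$). One must choose a complement of $U$ and $W$ \emph{simultaneously}, which is the paper's Lemma~\ref{lem:simultaneous complement} and is used there repeatedly. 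More seriously, your three ``anticipated obstacles'' in the inductive step are not technicalities but the entire content. Obstacle (i) is fatal as stated: the adjustment equation $c\bar c\,Q_q(w)+c=\delta$ in $\F/\Lambda$ can fail to be solvable --- e.g.\ for $\F=\F_2$, $\s=\id$, $\Lambda=0$, $Q_q(w)=1$, $\delta=1$, since $c^2+c=0$ for every $c\in\F_2$ --- so matching the planes $\langle u,u^*\rangle$ and $\langle w,w^*\rangle$ by adding multiples of $u$ and $w$ does not work, and no argument is offered for the ``smarter choice.'' Obstacle (ii) is circular: an isometry between the orthogonal complements of the two planes is Witt cancellation, which in this generality (degenerate $q$, characteristic $2$, arbitrary $\Lambda$, where the complements inherit nontrivial kernels) is essentially equivalent in difficulty to the theorem being proved; the needed ``secondary induction or strengthened hypothesis'' is precisely what is missing. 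Obstacle (iii) likewise requires a relative version of the statement, since the inductive extension of $f|_{U_1}$ has no reason to fix $u$.

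The paper's proof avoids all three problems by reversing the order of operations. It inducts by choosing a hyperplane $H<U$ with $U\cap\K(E)\le H$, uses the inductive hypothesis to reduce to $f|_H=\id$, and then moves the single remaining vector $a$ (where $U=H\oplus\langle a\rangle$ and $b=f(a)$) \emph{relative to} the fixed $H$. Since $H\le(b-a)^\perp$, in the cases where $\langle a\rangle$ and $\langle b\rangle$ admit a common complement inside $(b-a)^\perp$ (or inside $E$, when $(b-a)^\perp=E$), one extends $f$ by the identity there, using Lemmas~\ref{lem:simultaneous complement} and~\ref{lem:f+}; in the residual case $a,b\in(b-a)^\perp$ one shows $Q_q(b-a)=0$ and extends by the explicit transvection-like assignments $f(v)=c(b-a)+dv$ or $f(z)=z+c(b-a)+dv$, where solving for $c$ and $d$ only requires certain pairings $\omega_q(a,v)$, $\omega_q(b,v)$, $\omega_q(z,b-a)$ to be nonzero --- \emph{linear} conditions that can always be arranged, in contrast to your quadratic equation in (i). No hyperbolic completion and no cancellation theorem appears, which is exactly what makes the argument go through uniformly for degenerate forms, in characteristic $2$, and for all $\Lambda$; equation~\eqref{equ:Qomega} and Theorem~\ref{thm:qdetermined} then certify each extension is an isometry. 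If you wish to salvage your decomposition route, you would first have to prove Witt cancellation at this level of generality, which is not easier than the theorem itself.
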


Regarding the last statement, observe that when $U$ is isotropic, $U\cap\R(E)=U\cap\K(E)$. 
Note also that for general $U,W$ and $f$ as in the statement, we have that $f(U\cap\R(E))=W\cap\R(E)$ as $f$ is an isometry that takes $U\cap\K(E)$ to $W\cap\K(E)$.

The result implies that the group $\Gi(E,q)$ acts transitively on the elements of any fixed rank of the building of isotropic subspaces
\[ \PPi(E) = \set{W<E}{W\textrm{ isotropic},\ \R(E)<W}. \] 
This transitivity is used in \cite{Classical} to study the homology of the group $\Gi(E,q)$.

\medskip

The proof of the theorem will use the following  basic result about vector spaces: 

\begin{lemma}\label{lem:simultaneous complement}
Let $A,B\leq V$ be any two subspaces of the same dimension.  Then there exists a subspace $L$ which is simultaneously complementary to both $A$ and $B$.
\end{lemma}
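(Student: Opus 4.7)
The plan is to reduce to the case $V = A + B$ and then build $L$ explicitly as the span of ``diagonal vectors'' $a_i + b_i$, where $\{a_i\}$ and $\{b_i\}$ are matched bases of complements of $A\cap B$ in $A$ and in $B$ respectively.

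First I would set $C = A\cap B$ and fix direct sum decompositions $A = C \oplus A'$ and $B = C \oplus B'$, so that $A + B = C \oplus A' \oplus B'$. Since $\dim A = \dim B$, also $\dim A' = \dim B' = k$ with $k = \dim A - \dim C$. Reading off the three-term decomposition gives the two further identities
\[ A + B = A \oplus B' = A' \oplus B, \]
which will be the workhorses. I would then pick bases $a_1,\dots,a_k$ of $A'$ and $b_1,\dots,b_k$ of $B'$ and set
\[ L_1 := \Span(a_1+b_1,\dots,a_k+b_k) \subseteq A + B. \]

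The key verification is that $L_1\cap A = 0$: if $\sum \lambda_i(a_i+b_i) \in A$, then $\sum\lambda_i b_i \in A$; but this element also lies in $B'$, and $A\cap B' = 0$ by the decomposition, forcing all $\lambda_i=0$. The symmetric argument using $A'\cap B = 0$ gives $L_1\cap B = 0$. A dimension count then upgrades these to $A\oplus L_1 = A+B = B\oplus L_1$, exhibiting $L_1$ as a common complement of $A$ and $B$ inside $A + B$.

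To finish, I would pick any vector-space complement $L_2$ of $A+B$ in $V$ and set $L := L_1\oplus L_2$; then
\[ A\oplus L = (A\oplus L_1)\oplus L_2 = (A+B)\oplus L_2 = V, \]
and the same computation works with $B$ in place of $A$. The only point requiring a genuine idea is the diagonal choice defining $L_1$; once it is written down, every remaining verification is a routine dimension count, so I do not anticipate a real obstacle.
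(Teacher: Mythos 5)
Your proof is correct and uses essentially the same idea as the paper: the paper reduces to the case $A\cap B=0$, $A+B=V$ and takes $L=\{u+f(u): u\in A\}$ for an isomorphism $f:A\to B$, and your $L_1=\Span(a_1+b_1,\dots,a_k+b_k)$ is exactly this diagonal construction with $f$ specified on matched bases. The only difference is presentational: where the paper passes to the quotient $V/(A\cap B)$ and the subspace $A+B$, you handle $C=A\cap B$ and the ambient complement $L_2$ explicitly, which makes the lifting step fully visible but changes nothing essential.
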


Notice this would be false with three subspaces instead of two in e.g.~$V=\F_2^2$.
\begin{proof}
By modding out $A\cap B$ from $V$, we may assume without loss of generality that
$A\cap B=0$. 
By restricting to $A+B<V$, we may assume that $A+B=V$.
Pick any linear isomorphism
$f:A\to B$, 
and define $g:A\to V$ by setting $g(u)=u+f(u)$.
We claim that  $L=\im(g)\leq V$
is the desired complement.
Now, $A\cap L=0$ because $f$ is injective and $A,B$ are disjoint.
Similarly $B\cap L=0$.  Also $B+L=V$ since it contains $A$, and
$A+L=V$ since it contains $\im(f)=B$, which finishes the proof.
\end{proof}

In the proof of Witt's Lemma, we will repeatedly use the following construction: 

\begin{lem}\label{lem:f+}
Suppose $A,B,C\le E$ are subspaces of a formed space $(E,q)$ such that $C\le A^\perp\cap B^\perp$ and $A\cap C=0=B\cap C$. Then for any isometry $f:A\to B$, the map $f\op id:A\op C\to B\op C$ is also an isometry.  If $A=\langle a\rangle$, $B=\langle b\rangle$ and $f(a)=b$, the same holds under the weaker assumption that $C\le (a-b)^\perp$. 
\end{lem}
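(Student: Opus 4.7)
The plan is to verify the isometry conditions of Theorem~\ref{thm:qdetermined}: a linear map preserves the form $q$ if and only if it preserves both $\omega_q$ and $Q_q$. So I first note that $f\oplus\id$ is linear by construction, and then I need to check these two conditions on general elements $v = a + c$ and $v' = a' + c'$ of $A \oplus C$.

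For $\omega_q$, I expand biadditively:
\[ \omega_q\big((f\oplus\id)(v),(f\oplus\id)(v')\big) = \omega_q(f(a),f(a')) + \omega_q(f(a),c') + \omega_q(c,f(a')) + \omega_q(c,c'), \]
and similarly for $\omega_q(v,v')$ with $f(a),f(a')$ replaced by $a,a'$. The first terms agree because $f\colon A\to B$ is an isometry; the last terms agree trivially; the middle cross-terms vanish in both expressions because $C\le A^\perp\cap B^\perp$ (using that orthogonality is a symmetric relation, so $c\perp f(a')$ and $c\perp a'$, etc.). For $Q_q$, I apply equation~\eqref{equ:Qomega} of Theorem~\ref{thm:qdetermined} to get
\[ Q_q(f(a)+c) = Q_q(f(a)) + Q_q(c) + \omega_q(f(a),c) \pmod{\La}, \]
\[ Q_q(a+c) = Q_q(a) + Q_q(c) + \omega_q(a,c) \pmod{\La}. \]
The first terms match since $f$ is an isometry on $A$, the second terms are identical, and the cross-terms vanish because $C\le A^\perp\cap B^\perp$. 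This handles the general case.

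For the second statement, with $A=\langle a\rangle$, $B=\langle b\rangle$, and $f(a)=b$, elements of $A\oplus C$ have the form $\lambda a + c$. The only change from the first argument is that the cross-terms $\omega_q(\lambda a, c')$ and $\omega_q(\lambda b, c')$ (and their transposes) no longer individually vanish, so I instead compare differences:
\[ \omega_q(\lambda b,c') - \omega_q(\lambda a,c') = \bar{\lambda}\,\omega_q(b-a,c'), \]
which is zero since $C\le (a-b)^\perp = (b-a)^\perp$. The analogous computation handles the other transposed cross-term, using the symmetry of orthogonality. Similarly for $Q_q$: the equality $Q_q(\lambda b) = Q_q(\lambda a)$ follows because $f$ is an isometry on $A$, while the cross-terms $\omega_q(\lambda b, c) - \omega_q(\lambda a, c) = \bar\lambda\,\omega_q(b-a,c) = 0$ again by the weaker orthogonality hypothesis.

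The main obstacle is really just bookkeeping: ensuring that the sesquilinearity factors (bars and the side on which scalars appear) are tracked correctly, and that the $Q_q$ computation is performed mod $\La$ with the $\omega_q$ cross-terms being genuine zeros in $\F$, not merely zeros mod $\La$. Since $\omega_q$ takes values in $\F$ and we verify that cross-terms vanish in $\F$, this causes no issue when reducing mod $\La$.
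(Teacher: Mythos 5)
Your proof is correct and takes essentially the same approach as the paper's: both reduce to checking the $\omega_q$ and $Q_q$ conditions via Theorem~\ref{thm:qdetermined}, expand $Q_q$ on sums using equation~\eqref{equ:Qomega}, and kill the cross-terms using $C\le A^\perp\cap B^\perp$ in the general case or the equality $\omega_q(b,c)=\omega_q(a,c)$ forced by $C\le (a-b)^\perp$ in the rank-one case. You merely spell out the $\omega_q$ verification and the sesquilinearity bookkeeping that the paper dismisses as ``easy to see.''
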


\begin{proof}
In both cases, it is easy to see that $f\op id$ preserves $\omega_q$. By Theorem~\ref{thm:qdetermined}, we thus just have to check that $f$ preserves $Q_q$. In the first case, we have 
$Q_q(f(a)+c)=Q_q(f(a))+Q_q(c)+\omega_q(f(a),c)=Q_q(a)+Q_q(c)$. Similarly,  $Q_q(a+c)=Q_q(a)+Q_q(c)$. In the second case, we have instead 
$Q_q(f(a)+c)=Q_q(b)+Q_q(c)+\omega_q(b,c) =Q_q(a)+Q_q(c)+\omega_q(a,c)=Q_q(a)+Q_q(c)$. This proves the result in both cases. 
\end{proof}

\begin{proof}[Proof of Theorem~\ref{lem:Witt}] 
We will prove the theorem by induction on the dimension of $U$, starting with the case 
\[ U\le K:=\K(E). \]
By our hypothesis, it follows that $W\le K$
as well.
By Lemma~\ref{lem:simultaneous complement}, we can find a subspace $L\leq E$ such that
\[ U\oplus L=E=W\oplus L. \]
We extend $f$ to a bijection of $E$ by setting $f$ to be identity on $L$.  As $U^\perp=W^\perp=E$, Lemma~\ref{lem:f+} shows that this is an isometry. 

Now, we may assume that
\[ U\not\leq K. \]
Then we can pick a codimension-one subspace $H<U$, such that $$U\cap K\leq H.$$
By induction hypothesis we may find an isometry of $E$ which coincides with $f$ on $H$; by composing $f$ with the inverse of such an isometry, we reduce to the case where
\[ f|_H=id_H. \]
Write
\[ U=H\oplus\langle a\rangle  \ \ \ \textrm{and}\ \  \ W=H\oplus\langle b\rangle \ \ \ \textrm{with}\ \ \ b=f(a), \]
noting that $a,b\notin K$ by our choice of $H$. 
The subspace $(b-a)^\perp\leq E$  has codimension at most 1 since it is the kernel of a linear map with 1-dimensional codomain. 

\medskip

\noindent
\textbf{Case 1:  $(b-a)^\perp=E$.}  Then, by applying Lemma~\ref{lem:simultaneous complement} again to the nonzero vectors $a$ and $b$ in the vector space $E/H$, we obtain a hyperplane $M$ with $H\leq M<E$, which is complementary to both $\langle a\rangle$ and $\langle b\rangle$.   By Lemma~\ref{lem:f+},  
$f$ extends to a bijective isometry on all of $E$ by declaring it to be the identity on $M$ as $M\le (b-a)^\perp$.

\medskip

\noindent
\textbf{Case 2:  $(b-a)^\perp<E$}
is a hyperplane.
Note that 
\[ H\leq (b-a)^\perp, \]
since $\omega_q(h,a)=\omega_q(h,b)$ for all $h\in H$.
Also, we have that $a\in(b-a)^\perp$ if and only if $b\in(b-a)^\perp$ because
$\omega_q(b,b)=\omega_q(a,a)$, so
\begin{align*}
\omega_q(b,b-a) &= \omega_q(b,b)-\omega_q(b,a) = \omega_q(a,a)-\omega_q(b,a) = \omega_q(a-b,a).
\end{align*}

\medskip

\noindent
\textbf{Case 2.1: $a,b \notin (b-a)^\perp$.}
This means that
\[ (b-a)^\perp\oplus\langle a\rangle = E = (b-a)^\perp\oplus\langle b\rangle. \]
As in case one, we can extend $f$ by defining it to be the identity on $(b-a)^\perp$ using again Lemma~\ref{lem:f+} as this space is by definition orthogonal to $b-a$.

\medskip

\noindent
\textbf{Case 2.2: $a,b \in (b-a)^\perp$.}
We first check that this implies that
\[ Q_q(b-a)=0. \]
Indeed, if the characteristic of $\F$ is two, this is true because
\begin{align*}
Q_q(b-a) \, &=\,  Q_q(b)+Q_q(a)-\omega_q(b,a) \, \\
&=\,  2Q_q(b)-\omega_q(a,a) 
\, =\,  q(a,a)-\eps\overline{q(a,a)} \,  = 0 \mod\Lambda.
\end{align*}
If the characteristic of $\F$ is not 2, instead we observe that
\[ \omega_q(b-a,b-a)=0, \]
and then recall that, since $\Lambda_{\min}=\Lambda=\Lambda_{\max}$ in this case by Proposition~\ref{prop:Lambda}, the vanishing of 
$\omega_q(b-a,b-a)$ implies that of $Q_q(b-a)$, as in the proof of Lemma~\ref{prop:inject}(1).

We have $H,K\le (b-a)^\perp$ and by the additional assumption also $U=H\op \langle a\rangle, W=H\op \langle b\rangle$ are also subspaces of $(b-a)^\perp$. 
Recall also that $K\cap U=K\cap H=K\cap W$ by our choice of $H$. Let $L\le (b-a)^\perp$ by a common complement of $U$ and $W$ inside $(b-a)^\perp$ that contains $K$. (Such a complement can be constructed by first picking a complement $N$ of $H\cap K$ inside $K$ and then using  Lemma~\ref{lem:simultaneous complement} on $(b-a)^\perp/N$.)  
Setting $M=H\op L\le (b-a)^\perp$, we have 
$$(b-a)^\perp=M\op a=M\op b,$$
and by construction $K,H\le M$.

As $M$ is orthogonal to $b-a$, we can again extend $f$ to an isometry of $(b-a)^\perp$ by setting it to be the identity map on $M$. We need to extend $f$ to the whole of $E$.

Recall that $a,b\notin K$, so both
\[ a^\perp,b^\perp<E \]
are hyperplanes.
We claim that neither of them contains $M^\perp$.
Assume to the contrary that
\[ M^\perp\subset a^\perp. \]
That implies that
$ a\in a^{\perp\perp}\leq M^{\perp\perp}=M+K=M$, 
a contradiction since $a$ is not in $M$.  Similar logic applies to $b$.  So $M^\perp$ is contained in neither $a^\perp$ nor $b^\perp$.  So,
\[ M^\perp\cap a^\perp,\ M^\perp\cap b^\perp<M^\perp \]
have codimension one.  Using Lemma~\ref{lem:simultaneous complement} again, there is some vector
\[ v\in M^\perp \]
which is in neither of them.  That is, $v$ not orthogonal to either $a$ or $b$.
We break into two cases depending on whether or not $v$ is orthogonal to $b-a$.

\medskip

\noindent
\textbf{Case 2.2.1:  $v\notin(b-a)^\perp$.}
We intend to extend $f$ to all of $E$ by setting
\[ f(v) = c(b-a)+dv \]
for some scalars $c,d\in\F$ with $d\neq0$.  This will be bijective, so we are done if we can choose scalars yielding an isometry.
Note that
$f(v)-v\in M^\perp$
so
\[ \omega_q(m,v)=\omega_q(m,f(v)) \]
for all $m\in M$.
Now,
\[ \omega_q(b,f(v)) = d\omega_q(b,v). \]
Since $v$ is not orthogonal to $a$ or $b$,
\[ \omega_q(b,v),\omega_q(a,v)\neq0, \] so we can set
$$ d=\frac{\omega_q(a,v)}{\omega_q(b,v)} \neq0 \ \ \ 
\textrm{to get}
\ \ \ \omega_q(b,f(v))=\omega_q(a,v) \ \ \ \textrm{as needed.}$$
Lastly,
\[ Q_q(f(v)) = c\bar cQ_q(b-a) + d\bar dQ_q(v) + c\bar d\omega_q(v,b-a). \]
The first term is zero.  In the last term, both $d$ and $\omega_q(v,b-a)$ are nonzero (since we have assumed that $v$ is not orthogonal to $b-a$.  So we can pick
\[ c=\frac{(1-d\bar d)Q_q(v)}{\bar d\omega_q(v,b-a)}
\ \ \ \textrm{to get} \ \ \  Q_q(f(v))=Q_q(v). \]
This shows that $f$ is an isometry of $E$.

\medskip

\noindent
\textbf{Case 2.2.2:  $v\in (b-a)^\perp$.}
Choose a complement
\[ z\notin(b-a)^\perp. \]
This time we intend to extend $f$ to all of $V$ by setting
\[ f(z) = z+c(b-a)+dv \]
for some scalars $c,d\in\F$.  This will be bijective, so we are done if we can choose scalars yielding an isometry.
Again,
$f(z)-z\in M^\perp$ 
so
\[ \omega_q(m,z)=\omega_q(m,f(z)) \]
for all $m\in M$.
Now,
\[ \omega_q(b,f(z)) = \omega_q(b,z)+d\omega_q(b,v). \]
Since $v$ is not orthogonal to $b$,
we can set
\[ d=\frac{\omega_q(a,z)-\omega_q(b,z)}{\omega_q(b,v)} 
\ \ \ \textrm{to get}\ \ \ 
 \omega_q(b,f(z))=\omega_q(a,z) \ \ \ \textrm{as needed.}\]
Lastly,
$Q_q(f(z))$ \\
$= Q_q(z) + c\bar cQ_q(b-a) + d\bar dQ_q(v)  + c\omega_q(z,b-a) + d\omega_q(z,v) + \bar cd\omega_q(b-a,v)$ \\
$= Q_q(z) + d\bar dQ_q(v) + c\omega_q(z,b-a) + d\omega_q(z,v),$\\
since we have assumed that $v$ is orthogonal to $b-a$.
But $z$ is not orthogonal to $b-a$, so we can set
\[ c = -\frac{d\bar dQ_q(v)+d\omega_q(z,v)}{\omega_q(z,b-a)} \ \ \
\textrm{to get}\ \ \ 
 Q_q(f(z))=Q_q(z). \]
Using Theorem~\ref{thm:qdetermined}, this shows that $f$ is an isometry of $E$, which finishes the proof.
\end{proof}

The following result is a relative variant of Witt's Lemma, which tells us when
we can extend an isometry between two subspaces to an isometry of the
ambient space which is the identity on a fixed subspace $A$:

\begin{thm}[Relative Witt Lemma]\label{thm:relativeWitt}
Let $(E,q)$ be a formed space and $A\le E$ a subspace.
Suppose that $f:U\to W$ is a bijective isometry between two subspaces of $E$ such that 
\begin{enumerate}
\item $U\cap A=W\cap A$ and $f|_{U\cap A}=\id$;  
\item $f=\id \mod A^\perp$, i.e.~$f(u)-u\in A^\perp$ for any $u\in U$; 
\item $f\big(U\cap\K(E)\big)= W\cap\K(E)$. 
\end{enumerate}
Then $f$ can be extended to a bijective isometry of $E$ that restricts to the identity on $A$.
\end{thm}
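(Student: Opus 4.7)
The plan is to adapt the proof of Witt's Lemma (Theorem~\ref{lem:Witt}) by induction on $\dim U$, carefully tracking the requirement that the constructed extension restricts to the identity on $A$. The base cases are handled as follows: if $U=0$, the identity $\id_E$ works; if $U\le A$, condition~(1) forces $U=W$ and $f=\id$, so $\id_E$ again works; and if $U\le A+\K(E)$, I mimic the base case of Theorem~\ref{lem:Witt} by first extending $f|_{U\cap \K(E)}$ (a map inside $\K(E)\le A^\perp$) via Lemma~\ref{lem:simultaneous complement}, choosing a common complement $L$ of $U\cap \K(E)$ and $W\cap \K(E)$ that also contains a chosen complement $A'$ of $V:=U\cap A$ in $A$. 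Extending by the identity on $L$ gives an isometry via Lemma~\ref{lem:f+} that fixes $A$ by construction and agrees with $f$ on all of $U$ by linearity.

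For the inductive step when $U\not\le A+\K(E)$, pick a codimension-one subspace $H<U$ containing $U\cap (A+\K(E))$, together with some $a_0\in U\setminus H$. One verifies that $f|_H$ satisfies the three hypotheses of the theorem (using $H\cap A=V$ and $H\cap \K(E)=U\cap \K(E)$), applies the induction hypothesis to extend $f|_H$ to an isometry $g_0$ of $E$ fixing $A$, and replaces $f$ by $g_0^{-1}\circ f$ to reduce to the case $f|_H=\id$. We then have $U=H\oplus\langle a_0\rangle$ and $W=H\oplus\langle b\rangle$ with $b=f(a_0)$ and $b-a_0\in A^\perp$.

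For the remaining one-dimensional extension, I mirror Cases~1, 2.1, and~2.2 of the proof of Theorem~\ref{lem:Witt}. The new ingredient is that $A\le (b-a_0)^\perp$ by condition~(2): whenever the non-relative proof extends $f$ by the identity on a subspace $L$ or $M$ contained in $(b-a_0)^\perp$, I arrange for that subspace to additionally contain the complement $A'$ of $V$ in $A$, using Lemma~\ref{lem:simultaneous complement} applied to the appropriate quotient (the needed disjointness conditions hold because $a_0\notin A+\K(E)$ by the choice of $H$). This ensures the constructed extension is the identity on $A$.

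The main obstacle will be in Case~2.2.2, where the extension is defined on an auxiliary vector $z\notin (b-a_0)^\perp$ by $f(z)=z+c(b-a_0)+dv$ for specific scalars $c,d\in\F$. Since $A\le (b-a_0)^\perp$, one has $z\notin A$, but one must still verify that $z$ and the auxiliary vector $v\in M^\perp$ can be chosen so that the ambient subspace $M$ contains $A$ and the resulting formula yields an isometry of $E$ restricting to the identity on $A$.
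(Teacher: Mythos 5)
Your overall strategy (rerunning the induction of Theorem~\ref{lem:Witt} while tracking the subspace $A$) could in principle be carried out, but as written it has a genuine gap in the base case $U\le A+\K(E)$, and it is the fatal one because your induction funnels everything into that case. Your construction there agrees with $f$ only on $(U\cap A)+(U\cap\K(E))$, and this sum can be a \emph{proper} subspace of $U$ even when $U\le A+\K(E)$: vectors of $U$ may mix an $A$-component with a kernel component without either component lying in $U$. Concretely, take $\F=\bbQ$, $(\s,\eps,\La)=(\id,1,0)$, $E=\F^3$ with $q(x,y)=x_1y_1$, so $Q_q(x)=x_1^2$, $\omega_q(x,y)=2x_1y_1$ and $\K(E)=\R(E)=\langle e_2,e_3\rangle$. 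Let $A=\langle e_1\rangle$, $U=\langle e_1+e_2\rangle$, $W=\langle e_1+e_2+e_3\rangle$ and $f(e_1+e_2)=e_1+e_2+e_3$. Then $f$ is a bijective isometry satisfying hypotheses (1)--(3), with $U\cap A=W\cap A=0$ and $U\cap\K(E)=W\cap\K(E)=0$, and $U\le A+\K(E)=E$. Your recipe extends $f|_{U\cap\K(E)}$ (here the zero map) by the identity on a common complement $L\supseteq A'$; since $U\cap\K(E)=0$ forces $L=E$, it outputs $\id_E$, which does not extend $f$. The theorem is true here (the isometry $e_1\mapsto e_1$, $e_2\mapsto e_2+e_3$, $e_3\mapsto e_3$ works), but your construction cannot produce it, and since $A+\K(E)=E$ the inductive step never applies, so no later part of your argument can repair this. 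Handling such mixed vectors requires a genuinely new ingredient (e.g.\ kernel-valued transvection-type isometries), not just ``linearity''. Separately, you yourself leave Case 2.2.2 unresolved, and in Case 2.2 the existence of a common complement $L$ inside $(b-a)^\perp$ containing \emph{both} the kernel part and $A'$ is asserted rather than verified, so even the inductive step is an outline.

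For comparison, the paper's proof avoids the whole induction. It sets $A_0$ a complement of $U\cap A$ in $A$ and checks directly, using hypothesis (2) for the cross terms $\omega_q(f(u),a)=\omega_q(u+v,a)=\omega_q(u,a)$ with $v\in A^\perp$, that $\bar f=f\op\id\colon U\op A_0\to W\op A_0$ is an isometry (via Theorem~\ref{thm:qdetermined}); it then verifies the kernel hypothesis for $\bar f$ and invokes the absolute Theorem~\ref{lem:Witt}, and the resulting extension is automatically the identity on $A=(U\cap A)\op A_0$. Note that this enlargement is exactly what your base case is missing: in the counterexample above, $U\op A_0=\langle e_1,e_2\rangle$ now contains the kernel vector $e_2$, so the absolute Witt's Lemma can see and handle it. If you want to salvage your plan, the cheapest fix is to perform this enlargement as a preliminary step and then cite Theorem~\ref{lem:Witt}, rather than reproving it in relative form.
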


The following result shows that the assumption that $f$ is the identity modulo $A^\perp$ is actually necessary.

\begin{lem}\label{lem:Vperpfix} Let $(E,q)$ be a formed space and $A\leq E$ any subspace.
Any isometry of $E$ fixing $A$ pointwise preserves $A^\perp$ and is the identity modulo $A^\perp$.
\end{lem}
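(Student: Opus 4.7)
The plan is to deduce both claims from two ingredients: the fact that an isometry $g\colon E \to E$ preserves $\omega_q$ (by Theorem~\ref{thm:qdetermined}), together with the hypothesis $g|_A = \id_A$. Both conclusions then follow from short computations involving $\omega_q$, with no need for any non-degeneracy assumption on $q$ or any condition on $A$.

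For the first claim, that $g$ preserves $A^\perp$, I would take $v \in A^\perp$, so that $\omega_q(v,a) = 0$ for every $a \in A$, and compute, for any $a\in A$,
\[ \omega_q(g(v), a) \;=\; \omega_q(g(v), g(a)) \;=\; \omega_q(v,a) \;=\; 0, \]
using $g(a)=a$ in the first equality and the isometry property in the second. This shows $g(A^\perp) \subseteq A^\perp$. Applying the same argument to $g^{-1}$, which is itself a bijective isometry of $E$ fixing $A$ pointwise, yields $g^{-1}(A^\perp) \subseteq A^\perp$ and hence the reverse inclusion, so $g(A^\perp) = A^\perp$.

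For the second claim, that $g \equiv \id \pmod{A^\perp}$, I would fix an arbitrary $v \in E$ and verify directly that $g(v) - v \in A^\perp$ by computing, for any $a \in A$,
\[ \omega_q(g(v)-v,\,a) \;=\; \omega_q(g(v), g(a)) - \omega_q(v,a) \;=\; \omega_q(v,a) - \omega_q(v,a) \;=\; 0. \]
There is effectively no obstacle: the entire argument is a pair of three-line manipulations. The only conceptual point worth flagging is that the assumption ``$g$ is an isometry of $(E,q)$'' is used only through its consequence that $g$ preserves $\omega_q$, which is all that enters the definition of $A^\perp$ in Definition~\ref{def:kerneletc}.
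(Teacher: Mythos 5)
Your proof is correct and is essentially the paper's own argument: the paper likewise uses only that an isometry preserves $\omega_q$, computing $\omega_q(f(v),a)=\omega_q(f(v),f(a))=\omega_q(v,a)$ for $v\in E$, $a\in A$, which yields both conclusions at once (your two computations are this single one, split in two). The only slight excess is your appeal to $g^{-1}$ to upgrade $g(A^\perp)\subseteq A^\perp$ to equality, which requires bijectivity and is not needed for the statement as used in the paper.
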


\begin{proof} 
Let $f:E\to E$ be an isometry fixing $A$ pointwise.
If $v\in E$ and $a\in A$, then
\[ \omega_q(f(v),a)=\omega_q(f(v),f(a))=\omega_q(v,a), \]
so $f(v)-v\in A^\perp$, showing both that $f$ preserves $A^\perp$ and that it is the identity modulo $A^\perp$.
\end{proof}

\begin{proof}[Proof of Theorem~\ref{thm:relativeWitt}]
Let $A_0$ be a complement of $U\cap A=W\cap A$ inside $A$. We claim that the map $\bar f=f\op \id: U\op A_0\to W\op A_0$ is an isometry. Indeed, $\bar f$ restricts to an isometry on both $U$ and $A_0$. Let $u\in U$ and $a\in A_0$.  By assumption, we have $f(u)=u+v$ with $v\in A^\perp$. Hence $\omega_q(f(u),f(a))=\omega_q(u+v,a)=\omega_q(u,a)$, so $\bar f$ preserves $\omega_q$. Also, $Q_q(\bar f(u+a))=Q_q((u+v)+a)=Q_q(f(v))+Q_q(a)+\omega_q(u+v,a)=Q_q(v)+Q_q(a)+\omega_q(u,a)=Q_q(u+a)$ where we used that $f$ is an isometry. Hence, by Theorem~\ref{thm:qdetermined}, $\bar f$ is an isometry. As $\bar f((U\op A_0)\cap \K(E))=f(U\cap \K(E))\op (A_0\cap \K(E))=(W\op A_0)\cap \K(E)$, we can apply Theorem~\ref{lem:Witt} to extend $\bar f$ to a bijective isometry $\hat f$ of $E$ that takes $U\op A_0$ to $W\op  A_0$. By construction, $\hat f$ extends $f$ and restricts to the identity on $A$, which proves the result. 
\end{proof}

The following corollary of the relative Witt's Lemma was our motivation for writing the result: 

\begin{cor}\label{cor:Wittisotropic}
Let $(E,q)$ be a formed space, and $U,W_1,W_2$ isotropic subspaces containing $\R(E)$. 
Suppose $\dim W_1=\dim W_2$ and $W_1+U^\perp=W_2+U^\perp=E$. Then there is a bijective isometry
of $E$ sending $W_1$ to $W_2$, and restricting to the identity on $U$.
Furthermore, an isomorphism $f: W_1\to W_2$ restricting to the identity on $\R(E)$ can be extended to an isometry of $E$ fixing $U$ pointwise if and only if $f$ is the identity map modulo $U^\perp$.
\end{cor}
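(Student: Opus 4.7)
The plan is to deduce both assertions from the relative Witt Lemma (Theorem~\ref{thm:relativeWitt}) applied with fixed subspace $A=U$. The essential preliminary is to identify the various intersections with $U$ and $\K(E)$, after which everything reduces to checking the three hypotheses of that theorem.

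First I will show that $W_1\cap U = W_2\cap U = W_1\cap\K(E) = W_2\cap\K(E) = \R(E)$. The containments $\R(E)\subset W_i\cap U$ and $\R(E)\subset W_i\cap\K(E)$ follow from the hypothesis $\R(E)\subset U,W_i$ together with $\R(E)\subset\K(E)$. Conversely, any $v\in W_i\cap\K(E)$ satisfies $Q_q(v)=0$ by isotropy of $W_i$, so $v\in\R(E)$; hence $W_i\cap\K(E)=\R(E)$. Next, taking orthogonal complements in the equality $W_i+U^\perp=E$ yields $W_i^\perp\cap U^{\perp\perp}=\K(E)$, using $E^\perp=\K(E)$ and $(A+B)^\perp=A^\perp\cap B^\perp$. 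Since $W_i\subset W_i^\perp$ by isotropy and $U\subset U^{\perp\perp}$ in general, this gives $W_i\cap U\subset\K(E)$, and combined with the previous line, $W_i\cap U\subset\R(E)$.

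Granting this, the second assertion is straightforward. The ``only if'' direction is an immediate application of Lemma~\ref{lem:Vperpfix} to the extending isometry. For ``if'', given an isomorphism $f:W_1\to W_2$ that is the identity on $\R(E)$ and the identity modulo $U^\perp$: since $q|_{W_i}=0$ by isotropy, $f$ is automatically an isometry by Theorem~\ref{thm:qdetermined}. The three hypotheses of Theorem~\ref{thm:relativeWitt} with $A=U$ then hold: $W_1\cap U=W_2\cap U=\R(E)$ with $f|_{\R(E)}=\id$; the identity-modulo-$U^\perp$ condition is part of the assumption; and $f(W_1\cap\K(E))=f(\R(E))=\R(E)=W_2\cap\K(E)$. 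The theorem then produces the desired extension.

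For the first assertion, it suffices to exhibit \emph{some} linear isomorphism $f:W_1\to W_2$ satisfying the hypotheses of the ``if'' part just proved. The condition $W_i+U^\perp=E$ says that the quotient map $W_i\to E/U^\perp$ is surjective with kernel $W_i\cap U^\perp$. Choose complements $W_i''$ of $W_i\cap U^\perp$ in $W_i$ (which then map isomorphically onto $E/U^\perp$) and $W_i'$ of $\R(E)$ in $W_i\cap U^\perp$, yielding $W_i=\R(E)\oplus W_i'\oplus W_i''$. A dimension count using $\dim W_1=\dim W_2$ and $\dim W_i''=\dim(E/U^\perp)$ forces $\dim W_1'=\dim W_2'$. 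Define $f$ to be the identity on $\R(E)$, any linear isomorphism $W_1'\to W_2'$, and the isomorphism $W_1''\to W_2''$ induced by the common identification with $E/U^\perp$. By construction $f$ is identity on $\R(E)$ and satisfies $f(w)-w\in U^\perp$ for every $w\in W_1$, so the ``if'' direction applies. The main obstacle is the preliminary intersection computation; once it is in hand, everything else is routine bookkeeping.
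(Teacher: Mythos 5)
Your proposal is correct and follows essentially the same route as the paper: both reduce to Theorem~\ref{thm:relativeWitt} with $A=U$, establish $W_i\cap U=W_i\cap\K(E)=\R(E)$, and then choose an isomorphism $f:W_1\to W_2$ that is the identity on $\R(E)$ and the identity modulo $U^\perp$, with the converse direction handled by Lemma~\ref{lem:Vperpfix}. The only (harmless) differences are that you re-derive the intersection fact inline via $(W_i+U^\perp)^\perp=\K(E)$ rather than citing the paper's Lemma~\ref{lem:intV0}, and that you spell out explicitly, via the decomposition $W_i=\R(E)\oplus W_i'\oplus W_i''$, the existence of the isomorphism $f$ that the paper merely asserts can be chosen.
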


For an isotropic subspace $U$ containing the radical $\R(E)$, define the relative building to be
\[ \PPi(E,U) = \set{W<E}{W\textrm{ isotropic},\ \R(E)<W,\ W+U^\perp=E}. \]
The corollary implies in particular that the group 
\[ \Ai(E,U) := \set{g\in\Gi(E,q)}{g|_{U}=\textrm{id}_{U}} \]
of the bijective isometries of $(E,q)$ fixing $U$ pointwise acts transitively on the elements of any given fixed rank. This is used in \cite{Classical} to study the homology of the groups $\Ai(E,U)$.

The proof of the corollary  will use the following: 
\begin{lem}\label{lem:intV0} Let $(E,q)$ be a formed space and let
$U,W\le E$ be subspaces. If $U$ is isotropic and $W+U^\perp=E$, then $U\cap W^\perp\leq\R(E)$. In particular, if $W$ is also isotropic, then $U\cap W\leq\R(E)$. 
\end{lem}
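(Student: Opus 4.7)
The plan is to take an arbitrary vector $v\in U\cap W^\perp$ and show directly that it satisfies the two defining conditions of the radical: that $v\in\K(E)$, meaning $\omega_q(e,v)=0$ for every $e\in E$, and that $Q_q(v)=0$.

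First I would verify the kernel condition. Given any $e\in E$, the hypothesis $W+U^\perp=E$ lets me write $e=w+u'$ with $w\in W$ and $u'\in U^\perp$. Then $\omega_q(w,v)=0$ because $v\in W^\perp$ (using that orthogonality is a symmetric relation, as noted in the remark following Definition~\ref{def:kerneletc}), and $\omega_q(u',v)=0$ because $v\in U$ and $u'\in U^\perp$. Additivity of $\omega_q$ in the first slot then gives $\omega_q(e,v)=0$, so $v\in\K(E)$.

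Next I would check that $Q_q(v)=0$. This is immediate from $v\in U$ together with the isotropy of $U$: by Definition~\ref{def:kerneletc}(4), isotropic means $q|_U=0$, which in particular forces $Q_q|_U=0$. Combining the two conditions gives $v\in\R(E)$, proving the first assertion.

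For the ``in particular'' clause, note that if $W$ is itself isotropic then $W\subseteq W^\perp$ (as pointed out in the remark after Definition~\ref{def:kerneletc}), so $U\cap W\subseteq U\cap W^\perp\subseteq\R(E)$ by what was just proved. There is no real obstacle here—the lemma is essentially a direct unpacking of the definitions of $\K(E)$, $\R(E)$, isotropy, and orthogonal complement, and the only place one must be careful is in using symmetry of $\omega_q$ to convert the hypothesis $v\in W^\perp$ into orthogonality of $W$ against $v$ from the left.
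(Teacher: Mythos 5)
Your proof is correct and is essentially the paper's own argument: the paper sets $A=U\cap W^\perp$ and deduces $W\subseteq A^\perp$ and $U^\perp\subseteq A^\perp$, hence $E=W+U^\perp\subseteq A^\perp$, which is exactly your element-wise decomposition $e=w+u'$ phrased as inclusions of orthogonal complements. The $Q_q$ step and the ``in particular'' clause ($W\subseteq W^\perp$ for $W$ isotropic) also match the paper verbatim.
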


\begin{proof}[Proof of Lemma~\ref{lem:intV0}]
Suppose that $U\cap W^\perp=A$. As $A\subset W^\perp$, we also have $W\subset A^\perp$. And $A\subset U$ implies that $U^\perp\subset A^\perp$. So $E=W+U^\perp\subset A^\perp$. 
This means that 
$A\subset\K(E)$. But $A\subset U$ is 
isotropic, so $Q_q|_A=0$ and we conclude $A\leq\R(E)$ as claimed.
The last claim follows from the fact that $W\subset W^\perp$ when $W$ is isotropic. 
\end{proof}

\begin{proof}[Proof of Corollary~\ref{cor:Wittisotropic}]
We want to apply Theorem~\ref{thm:relativeWitt}. For this, we need to construct a bijective isometry $f:W_1\to W_2$ satisfying the assumptions of the theorem. As $W_1$ and $W_2$ are isotropic, any isomorphism will give a bijective isometry. 
Note now that, by Lemma~\ref{lem:intV0}, $W_i\cap U\le \R(E)$, and hence $W_i\cap U=\R(E)$ as $\R(E)$ is assumed to be contained in $U,W_1$ and $W_2$. This gives condition (1) in the theorem. As $W_1+U^\perp=E=W_2+U^\perp$, we can pick a isomorphism $f$ which is the identity modulo $U^\perp$,  and hence satisfies condition (2). As  $W_1 \supset \R(E) \subset W_2$, we can choose such an $f$ which is the identity on $\R(E)$. This shows that condition (3) is also satisfied as $W_i\cap \K(E)=W_i\cap \R(E)=\R(E)$ as each $W_i$ is isotropic. 
Hence there exists a bijective isometry of $E$ taking $W_1$ to $W_2$ and restricting to the identity on $U$. 

For the last statement, Lemma~\ref{lem:Vperpfix} shows that any isometry  of $E$ that fixes $U$ necessarily is the identity modulo $U^\perp$, which gives one direction. On the other hand, if $f$ is the identity on $\R(E)$ and the identity modulo $U^\perp$, we see that it satisfies the conditions of Theorem~\ref{thm:relativeWitt} as it is automatically an isometry, satisfies conditions (2) and (3) by assumption, and condition (1) is satisfied by the above argument, which does not depend on $f$.   
\end{proof}

\appendix

\section{The possible values of $\Lambda$}\label{sec:Lambda}

Let $(\F,\s)$ be as above a field with chosen involution, where we again write  $\bar c=\sigma(c)$.
If $\sigma$ is not the identity, then \[ \F\supset\F^\sigma \]
is a field extension of degree 2, where $\F^\s$ denotes the fixed points of the involution.
We fix as before a scalar $\eps\in\F$ satisfying $\eps\bar\eps=1$.

Recall that 
\[ \Lambda_{min} = \set{a-\eps\bar a}{a\in\F} \ \ \ \ \textrm{and}\ \ \ \
 \Lambda_{max} = \set{a\in\F}{a+\eps\bar a=0}. \]
Note first that $\Lambda_{min}\le\Lambda_{max}$, because
$(a-\eps\bar a)+\eps\overline{(a-\eps\bar a)} = 
0$. 

The following result gives the values of $\Lambda_{min}$ and $\Lambda_{max}$ in the case of fields, depending on whether the involution is trivial or not, the characteristic of the field is even or not, and whether $\eps$ is equal to $1,-1$ or something else. 

\begin{prop}\label{prop:Lambda}
Let $\F$ be a field of characteristic $p$, $\s$ an involution on $\F$ and $\eps\in \F$ such that $\eps\bar\eps=1$. Then the values of $\Lambda_{min}$ and $\Lambda_{max}$ are given by the following table: 
\[
  \begin{array}{ c|c|c||c|c  }
	\sigma=\id? & p=2? & \eps & \Lambda_{min} & \Lambda_{max} \\
    \hline
    yes&no&1 & 0&0 \\ \hline
    yes&no&-1 & \F&\F \\ \hline
    yes&yes&1 & 0&\F \\ \hline
    no& &-1 & \F^\sigma & \F^\sigma \\ \hline
	no& &\neq -1 & (1+\eps)\F^\sigma & (1+\eps)\F^\sigma.
  \end{array}
\]
\end{prop}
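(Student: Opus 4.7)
My plan is to verify the five rows of the table by case analysis, using throughout the inclusion $\Lambda_{\min} \subseteq \Lambda_{\max}$ that is observed just above the statement. The two main cases are $\sigma = \id$ and $\sigma \neq \id$, and within each, subcases are driven by the value of $\eps$ and by the characteristic.

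For $\sigma = \id$, the condition $\eps\bar\eps = 1$ becomes $\eps^2 = 1$, so $\eps = \pm 1$ (coinciding in characteristic $2$), and the first three rows reduce to direct computation: $\Lambda_{\min} = \{(1-\eps)a : a \in \F\}$ and $\Lambda_{\max} = \{a \in \F : (1+\eps)a = 0\}$. Plugging in $\eps = 1$ gives $\Lambda_{\min} = 0$ and $\Lambda_{\max} = \ker(2\cdot)$, which is $0$ in odd characteristic and all of $\F$ in characteristic $2$; plugging in $\eps = -1$ in odd characteristic gives $\Lambda_{\min} = 2\F = \F$ and $\Lambda_{\max} = \F$.

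For $\sigma \neq \id$, the extension $\F^\sigma \subseteq \F$ is a separable (indeed Galois) quadratic extension, so $\F$ is $2$-dimensional over $\F^\sigma$ and the trace $\mathrm{tr}(a) = a + \bar a$ surjects onto $\F^\sigma$. I would consider the $\F^\sigma$-linear maps $S, T \colon \F \to \F$ defined by $S(a) = a - \eps\bar a$ and $T(a) = a + \eps\bar a$, so that $\Lambda_{\min} = \im S$ and $\Lambda_{\max} = \ker T$. Row $4$ ($\eps = -1$) is then immediate: $S = \mathrm{tr}$ is surjective onto $\F^\sigma$, giving $\Lambda_{\min} = \F^\sigma$, and $\ker T = \{a : \bar a = a\} = \F^\sigma$ directly.

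For row $5$ ($\eps \neq -1$) the goal is to show both $\Lambda_{\min}$ and $\Lambda_{\max}$ equal the $1$-dimensional $\F^\sigma$-subspace $(1+\eps)\F^\sigma$. A dimension count via rank-nullity gives $\dim_{\F^\sigma}\Lambda_{\max} = 1$: the kernel $\ker T$ is at most $1$-dimensional (else $T \equiv 0$ would force $\eps = -1$) and is nonzero by Hilbert's Theorem $90$ applied to the norm-$1$ element $-\eps$, which produces $c \in \F^\times$ with $\bar c = -c\bar\eps$, so $c \in \ker T$. A parallel argument for $S$ yields $\dim_{\F^\sigma}\Lambda_{\min} = 1$, and combined with the inclusion $\Lambda_{\min} \subseteq \Lambda_{\max}$ this forces equality. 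The main obstacle will be the explicit identification of the generator as $1+\eps$: the key calculation is $(1+\eps)/(1+\bar\eps) = \eps$, verified by multiplying numerator and denominator by $\eps$ and using $\eps\bar\eps = 1$, which exhibits $1+\eps$ as a Hilbert $90$ certificate. From this one deduces that $(1+\eps)\F^\sigma$ gives the required common $1$-dimensional subspace, with extra care needed in the degenerate subcase $\eps = 1$, where the direct parametrization must be adjusted.
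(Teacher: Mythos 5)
Your case split, the reduction of rows 1--3 to the invertibility of $1\pm\eps$, and the dimension count for $\sigma\neq\id$ (rank--nullity for the $\F^\sigma$-linear maps $S(a)=a-\eps\bar a$ and $T(a)=a+\eps\bar a$, nonvanishing of $\ker T$ via Hilbert 90 applied to $-\eps$, and the inclusion $\Lambda_{min}\subseteq\Lambda_{max}$ forcing two lines to coincide) are all correct, and they run parallel to the paper's own proof. The genuine gap sits exactly at the step you flagged as ``the main obstacle'': your key calculation $(1+\eps)/(1+\bar\eps)=\eps$ shows $1+\eps=\eps\overline{(1+\eps)}$, i.e.\ that $1+\eps$ spans $\ker S$, the line of elements with $a=\eps\bar a$. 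But $\Lambda_{max}=\ker T$ consists of the $a$ with $a=-\eps\bar a$: your Hilbert 90 certificate certifies the wrong sign. Indeed $T(1+\eps)=2(1+\eps)$, so $1+\eps\in\Lambda_{max}$ only in characteristic $2$ (where $1+\eps=1-\eps$ and the identification is trivially fine). The element your setup actually produces is $S(1)=1-\eps\in\Lambda_{min}$, which for $\eps\neq 1$ yields $\Lambda_{min}=\Lambda_{max}=(1-\eps)\F^\sigma$; and since $(1-\eps)/(1+\eps)$ is $\sigma$-antifixed, $(1-\eps)\F^\sigma\neq(1+\eps)\F^\sigma$ whenever $p\neq 2$. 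So the deduction ``$(1+\eps)\F^\sigma$ gives the required common subspace'' is a non sequitur that cannot be repaired.

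It cannot be repaired because row 5 of the table is itself misstated in characteristic $\neq 2$. Concretely, for $\F=\bbC$ with complex conjugation and $\eps=1$ one has $\Lambda_{min}=\set{a-\bar a}{a\in\bbC}=i\bbR=\set{a}{a+\bar a=0}=\Lambda_{max}$, whereas $(1+\eps)\F^\sigma=2\bbR=\bbR$; similarly for $\eps=i$ the line is $(1-i)\bbR$, not $(1+i)\bbR$. You should know that the paper's own proof commits the same slip: after substituting $-\eps$ for $\eps$, the element its earlier argument places in $\ker T$ is $1+(-\eps)=1-\eps$, yet the displayed conclusion reads $(1+\eps)\F^\sigma$. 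The corrected row is $\Lambda_{min}=\Lambda_{max}=(1-\eps)\F^\sigma$ for $\eps\neq\pm 1$ (agreeing with $(1+\eps)\F^\sigma$ exactly in characteristic $2$), while for $\eps=1$, $\sigma\neq\id$, $p\neq 2$ the common value is the trace-zero line $\set{a}{a+\bar a=0}$, with a generator supplied by Hilbert 90 for $-1$ (or simply $c-\bar c$ for any $c\notin\F^\sigma$); this line is abstractly $\F^\sigma$-isomorphic to $\F^\sigma$ but is not the subset $\F^\sigma$ of $\F$. Your instinct that $\eps=1$ is a degenerate subcase ``where the direct parametrization must be adjusted'' was sound, but the adjustment needed is to the statement itself, not merely to your parametrization; as written, your final step proves membership in $\ker S$ where membership in $\ker T$ is required.
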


Note in particular that in most cases $\Lambda_{min}=\Lambda_{max}$ in which case there is only one possible choice for $\Lambda_{min}\le \Lambda\le \Lambda_{max}$. The only situation where $\Lambda_{min}\neq\Lambda_{max}$ is when $\s$ is the identity and the characteristic is equal to 2, in which case $\Lambda$ can be any additive subgroup of $\F$. 

\begin{rem}
In the Hermitian case, we take $\s\neq\id$ and $\eps=1$. If $\operatorname{char}(\F)=2$, then $\eps=1=-1$ and we see that $\La=\F^\s$, while if $\operatorname{char}(\F)\neq 2$, then $\eps=1\neq -1$ and we see that $\La=2\F^\s$, which also identifies with $\F^\s$ as $2$ is invertible in that case. 
\end{rem}

\begin{proof}
Suppose first that $\sigma=\id$.  This implies that
\[ \eps=\pm1. \]
(In characteristic 2, this gives only one possibility.) If $\eps=1$, then $\Lambda_{min}=0$.  Otherwise, $(1-\eps)$ is invertible and $\Lambda_{min}=\F$. This gives $\Lambda_{min}$ in the first three cases in the table. 
If $\eps=-1$, we have that $\Lambda_{max}=\F$.  Otherwise, $(1+\eps)$ is invertible and $\Lambda_{max}=0$.  This completes the proof of the first three cases in the table.

Now consider the case when $\sigma$ is nontrivial.
The set $\Lambda_{min}$ is the image of the map
\[ \F\to\F,\quad a\mapsto a-\eps\bar a \]
This map is $\F^\sigma$-linear, so its image is an $\F^\sigma$-subspace of the two-dimensional vector space $\F$ (over $\F^\sigma$).
We claim that this map has rank 1.

First assume that $\eps\neq -1$.  Then its kernel contains the element
$(1+\eps)$, so it is not injective.
However, it cannot be identically zero, because (by applying the map to $1\in\F^\sigma$) that would imply $\eps=1$ and $\sigma=id$, contradiction our assumption.
Next assume that $\eps=-1$.  Then our map $a\mapsto a+\bar a$ is the trace map of the field extension.  This map is well-known to be nonzero from character theory. 
It cannot be surjective since its image lies in $\F^\sigma$.
In either case, we conclude that $\Lambda_{min}$ is a one-dimensional subspace of $\F$ over $\F^\sigma$. 

We will compute $\Lambda_{max}$ to identify $\Lambda_{min}$. By definition, $\Lambda_{max}$ is the kernel of the map
\[ \F\to\F^\sigma,\quad a\mapsto a+\eps\bar a. \]
Since this is the same map as before but with $\eps$ replaced by $-\eps$, we have already showed that it has rank 1 as an $\F^\sigma$-linear endomorphism of $\F$.
So its kernel is a one-dimensional subspace.  Because it must contain $\Lambda_{min}$, we have
\begin{align*}
\Lambda_{max} &= \Lambda_{min} = \begin{cases} \F^\sigma &\text{if }\eps=-1,\\
(1+\eps)\F^\sigma&\text{if }\eps\neq -1. \end{cases}
\end{align*}
as subsets of $\F$.
\end{proof}

\subsection*{Acknowledgements}
The authors were both supported by the Danish National Research Foundation through
the Centre for Symmetry and Deformation (DNRF92). The second author was also supported by the  European Research
Council (ERC) under the European Union's Horizon 2020 research and
innovation programme (grant agreement No.~772960), the Heilbronn Institute
for Mathematical Research, and 
would like to thank the Isaac Newton Institute for Mathematical Sciences, Cambridge, for support and hospitality during the programme Homotopy Harnessing Higher Structures, where this paper was finalized (EPSRC grant numbers EP/K032208/1 and EP/R014604/1).

\bibliographystyle{mscplain.bst}
\bibliography{biblio}

\end{document}